\documentclass{amsproc}%
\usepackage{amsfonts}
\usepackage{amsmath}
\usepackage{amssymb}
\usepackage{graphicx}
\usepackage{hyperref}%
\setcounter{MaxMatrixCols}{30}
\providecommand{\U}[1]{\protect\rule{.1in}{.1in}}
\theoremstyle{plain}

\newtheorem{corollary}{Corollary}

\newtheorem{definition}{Definition}
\newtheorem{example}{Example}

\newtheorem{lemma}{Lemma}

\newtheorem{proposition}{Proposition}

\newtheorem{theorem}{Theorem}
\numberwithin{equation}{section}
\begin{document}
\title[{\normalsize On S-Comultiplication Modules}]{{\normalsize On S-Comultiplication Modules}}
\author{Eda Y\i ld\i z}
\address{Department of Mathematics, Yildiz Technical University, Istanbul, Turkey.}
\email{edyildiz@yildiz.edu.tr}
\author{\"{U}nsal Tekir}
\address{Department of Mathematics, Marmara University, Istanbul, Turkey.}
\email{utekir@marmara.edu.tr}
\author{Suat Ko\c{c}}
\address{Department of Mathematics, Marmara University, Istanbul, Turkey.}
\email{suat.koc@marmara.edu.tr}
\subjclass[2000]{13C13, 13C99.}
\keywords{S-multiplication module, S-comultiplcation module, S-prime submodule, S-second submodule}

\begin{abstract}
Let $R\ $be a commutative ring with $1\neq0$ and $M$ be an $R$-module. Suppose
that $S\subseteq R\ $is a multiplicatively closed set of $R.\ $Recently, Sevim
et al. in (\cite{SenArTeKo}, Turk. J. Math. (2019)) introduced the notion of
$S$-prime submodule which is a generalization of prime submodule and used them
to characterize certain class of rings/modules such as prime submodules,
simple modules, torsion free modules,\ $S$-Noetherian modules and etc.
Afterwards, in (\cite{AnArTeKo}, Comm. Alg. (2020)), Anderson et al. defined
the concept of $S$-multiplication modules and $S$-cyclic modules which are
$S$-versions of multiplication and cyclic modules and extended many results on
multiplication and cyclic modules to $S$-multiplication and $S$-cyclic
modules. Here, in this article, we introduce and study $S$-comultiplication
module which is the dual notion of $S$-multiplication module. We also
characterize certain class of rings/modules such as comultiplication modules,
$S$-second submodules, $S$-prime ideals, $S$-cyclic modules in terms of
$S$-comultiplication modules.

\end{abstract}
\maketitle

\section{\bigskip Introduction}

Throughout this article, we focus only on commutative rings with a unity and
nonzero unital modules. Let $R\ $will always denote such a ring and $M\ $will
denote such an $R$-module. This paper aims to introduce and study the concept
of $S$-comultiplication module which is both the dual notion of $S$%
-multiplication modules and a generalization of comultiplication modules.
Sevim et al. in their paper \cite{SenArTeKo} gave the concept of $S$-prime
submodules and used them to characterize certain classes of rings/modules such
as prime submodules, simple modules, torsion-free modules and S-Noetherian
rings. A nonempty subset $S$ of $R\ $is said to be a \textit{multiplicatively
closed set} (briefly, \textit{m.c.s}) of $R\ $if $0\notin S,1\in S$ and $st\in
S\ $for each $s,t\in S$. From now on $S\ $will always denote a m.c.s of
$R.\ $Suppose that $P\ $is a submodule of $M$, $K$ is a nonempty subset of
$M\ $and $J$ is an ideal of $R.\ $Then the residuals of $P\ $by $K\ $and
$J\ $are defined as follows:%
\begin{align*}
(P  &  :K)=\{x\in R:xK\subseteq P\}\\
(P  &  :_{M}J)=\{m\in M:Jm\subseteq P\}.
\end{align*}
In particular, if $P=0,\ $we sometimes use $ann(K)\ $instead of $(0:K).\ $%
Recall from \cite{SenArTeKo} that a submodule $P\ $of $M\ $is said to be an
$S$\textit{-prime submodule} if $(P:M)\cap S=\emptyset$ and there exists $s\in
S\ $such that $am\in P\ $for some $a\in R\ $and $m\in M\ $implies either
$sa\in(P:M)$ or $sm\in P.\ \ $Particularly, an ideal $I\ $of $R\ $is said to
be an $S$\textit{-prime ideal} if $I\ $is an $S$-prime submodule of $M$.\ We
here note that if $S\subseteq u(R),$ where $u(R)\ $is the set of all units in
$R,\ $the notion of $S$-prime submodule is in fact prime submodule.

Recall that an $R$-module $M\ $is said to be a \textit{multiplication module}
if each submodule $N\ $of $M\ $has the form $N=IM\ $for some ideal $I\ $of
$R\ $\cite{Bar}. It is easy to note that $M\ $is a multiplication module if
and only if $N=(N:M)M\ $\cite{Smi}. The author in \cite{Smi} showed that for a
multiplication module $M,\ $a submodule $N\ $of $M\ $is prime if and only if
$(N:M)\ $is a prime ideal of $R$ \cite[Corollary 2.11]{Smi}.

The dual notion of prime submodule which is called second submodule was first
introduced and studied by S. Yassemi in \cite{Yas}. Recall from that a nonzero
submodule $P\ $of $M\ $is said to be a second submodule if for each $a\in
R,\ $the homothety $P\overset{a.}{\longrightarrow}P\ $is either zero or
surjective. Note that if $P\ $is a second submodule of $M$, then $ann(P)\ $is
a prime ideal of $R.\ $For the last twenty years, the dual notion of prime
submodule has attracted many researchers and it has been studied in many
papers. See, for example, \cite{AnFa1}, \cite{AnFa2}, \cite{AnFa3},
\cite{AnFa5}, \cite{CeAlsmi} and \cite{CeAl2}. Also the notion of
comultiplication module which is the dual notion of multiplication module was
first introduced by Ansari-Toroghy and Farshadifar in \cite{AnFa4} and has
been widely studied by many authors. See, for instance, \cite{AlSmi}%
,\ \cite{HanKeyFar}, \cite{At} and \cite{Ce}. Recall from \cite{AnFa4} that an
$R$-module $M\ $is said to be a \textit{comultiplication module} if each
submodule $N\ $of $M\ $has the form $N=(0:_{M}I)\ $for some ideal $I\ $of
$R.\ $Note that $M\ $is a comultiplication module if and only if
$N=(0:_{M}ann(N)).\ $

Recently, Anderson et al. in \cite{AnArTeKo}, introduced the notions of
$S$-multiplication modules and $S$-cyclic modules, and they extended many
properties of multiplication and cyclic modules to these two new classes of
modules. They also showed that for $S$-multiplication modules, any submodule
$N\ $of $M\ $is $S$-prime submodule if and only if $(N:M)\ $is an $S$-prime
ideal of $R\ $\cite[Proposition 4]{AnArTeKo}. An $R$-module $M\ $is said to be
an $S$\textit{-multiplication module }if for each submodule $N\ $of
$M,\ $there exist $s\in S\ $and an ideal $I\ $of $R\ $such that $sN\subseteq
IM\subseteq N.\ $Also $M\ $is said to be an $S$-cyclic module if there exists
$s\in S\ $such that $sM\subseteq Rm\ $for some $m\in M.\ $They also showed
that every $S$-cyclic module is an $S$-multiplication module and they
characterized finitely generated multiplication modules in terms of $S$-cyclic
modules\ (See, \cite[Proposition 5]{AnArTeKo} and \cite[Proposition
8]{AnArTeKo}).

Farshadifar, currently, in her paper \cite{Fa} defined the dual notion of
$S$-prime submodule which is called $S$-second submodule and investigate its
many properties similar to second submodules. Recall that a submodule $N\ $of
$M\ $is said to be an $S$-second if $ann(N)\cap S=\emptyset$ and there exists
$s\in S\ $such that either $saN=0\ $or $saN=sN\ $for each $a\in R.\ $In
particular, the author in \cite{Fa} investigate the $S$-second submodules of
comultiplication modules. Here, we introduce $S$-comultiplication module which
is the dual notion of $S$-multiplication modules and investigate its many
properties. Recall that an $R$-module $M\ $is said to be an $S$%
-comultiplication module if for each submodule $N\ $of $M,\ $there exist an
$s\in S\ $and an ideal $I$ of $R\ $such that $s(0:_{M}I)\subseteq
N\subseteq(0:_{M}I).$

Among other results in this paper, we chracterize certain classes of
rings/modules such as comultiplication modules, $S$-second submodules,
$S$-prime ideals, $S$-cyclic modules (See, Theorem \ref{tloc}, Theorem
\ref{tcom}, Proposition \ref{pcy1}, Theorem \ref{torsion}, Theorem \ref{tcy2},
Theorem \ref{tcy3} and Theorem \ref{tm3}). Also, we prove the S-version of
Dual Nakayama's Lemma (See, Theorem \ref{tdu}).

\section{S-comultiplication modules}

\begin{definition}
Let $M\ $be an $R$-module and $S\subseteq R$ be a m.c.s of $R.\ M\ $is said to
be an $S$-comultiplication module if for each submodule $N\ $of $M,\ $there
exist an $s\in S\ $and an ideal $I$ of $R\ $such that $s(0:_{M}I)\subseteq
N\subseteq(0:_{M}I).\ $In particular, a ring $R\ $is said to be an
$S$-comultiplication ring if it is an $S$-comultiplication module over itself.
\end{definition}

\begin{example}
Every $R$-module $M\ $with $ann(M)\cap S\neq\emptyset\ $is trivially an
$S$-comultiplication module.
\end{example}

\begin{example}
\textbf{(An S-comultiplication module that is not S-multiplication)} Let
$p\ $be a prime number and consider the $%
\mathbb{Z}
$-module
\[
E(p)=\{\alpha=\frac{m}{p^{n}}+%
\mathbb{Z}
:m\in%
\mathbb{Z}
,n\in%
\mathbb{N}
\cup\{0\}\}.
\]
Then every submodule of $E(p)$ is of the form $G_{t}=\{\alpha=\frac{m}{p^{t}}+%
\mathbb{Z}
:m\in%
\mathbb{Z}
\}$ for some fixed $t\geq0.$\ Take the multiplicatively closed set
$S=\{1\}.\ $Note that $(G_{t}:E(p))E(p)=0_{E(p)}\neq G_{t}\ $for each
$t\geq1.\ $Then $E(p)\ $is not an $S$-multiplication module. Now, we will show
that $E(p)\ $is an $S$-comultiplication module. Let $t\geq0.\ $Then it is easy
to see that $(0:_{E(p)}ann(G_{t}))=(0:_{E(p)}p^{t}%
\mathbb{Z}
)=G_{t}.\ $Therefore, $E(p)\ $is an $S$-comultiplication module.
\end{example}

\begin{example}
\label{exco}Every comultiplication module is also an $S$-comultiplication
module. Also the converse is true provided that $S\subseteq u(R).$
\end{example}

\begin{example}
\textbf{(An S-comultiplication module that is not comultiplication)} Consider
the $%
\mathbb{Z}
$-module $M=%
\mathbb{Z}
$ and $S=reg(%
\mathbb{Z}
)=%
\mathbb{Z}
-\{0\}.\ $Now, take the submodule $N=m%
\mathbb{Z}
$,\ where $m\neq0,\pm1.\ $Then $(0:ann(m%
\mathbb{Z}
))=%
\mathbb{Z}
\neq m%
\mathbb{Z}
$ so that $M$ is not a comultiplication module. Now, take a submodule
$K\ $of$\ M.\ $Then $K=k%
\mathbb{Z}
$ for some $k\in%
\mathbb{Z}
.\ $If $k=0,\ $then choose $s=1\ $and note that $s(0:ann(K))=(0)=k%
\mathbb{Z}
.\ $If $k\neq0,\ $then choose $s=k\ $and note that $s(0:ann(K))\subseteq k%
\mathbb{Z}
=K\subseteq(0:ann(K)).\ $Therefore, $M\ $is an $S$-comultiplication module.
\end{example}

\begin{lemma}
\label{lma}Let $M\ $be an $R$-module. The following statements are equivalent.

(i) $M\ $is an $S$-comultiplication module.

(ii)\ For each submodule $N\ $of $M,\ $there exists $s\in S\ $such that
$s(0:_{M}ann(N))\subseteq N\subseteq(0:_{M}ann(N)).\ $

(iii)\ For each submodule $K,N\ $of $M\ $with $ann(K)\subseteq ann(N),\ $there
exists $s\in S\ $such that $sN\subseteq K.$
\end{lemma}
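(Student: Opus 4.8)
The plan is to establish the cycle of implications (i)$\Rightarrow$(ii)$\Rightarrow$(iii)$\Rightarrow$(i). The whole argument rests on the fact that $N\mapsto ann(N)$ and $I\mapsto(0:_{M}I)$ are both inclusion-reversing, together with the two ``unit'' inclusions of the resulting order-reversing correspondence: for every submodule $N$ one has $N\subseteq(0:_{M}ann(N))$ (immediate, since $ann(N)$ annihilates $N$), and for every ideal $I$ one has $I\subseteq ann((0:_{M}I))$ (immediate, since $I$ annihilates $(0:_{M}I)$). From these I will extract the sharper identity $ann((0:_{M}ann(N)))=ann(N)$: the inclusion $\supseteq$ is the second unit inclusion applied to $I=ann(N)$, while $\subseteq$ follows by reversing $ann$ in $N\subseteq(0:_{M}ann(N))$. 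This identity is the linchpin for closing the cycle.

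For (i)$\Rightarrow$(ii), given $N$ the definition supplies $s\in S$ and an ideal $I$ with $s(0:_{M}I)\subseteq N\subseteq(0:_{M}I)$. From $N\subseteq(0:_{M}I)$ I get $IN=0$, hence $I\subseteq ann(N)$; applying $(0:_{M}-)$ reverses this to $(0:_{M}ann(N))\subseteq(0:_{M}I)$. Multiplying by $s$ yields $s(0:_{M}ann(N))\subseteq s(0:_{M}I)\subseteq N$, and since $N\subseteq(0:_{M}ann(N))$ is automatic, this is exactly (ii) with the same $s$. For (ii)$\Rightarrow$(iii), take $K,N$ with $ann(K)\subseteq ann(N)$ and apply (ii) to $K$ to obtain $s\in S$ with $s(0:_{M}ann(K))\subseteq K$. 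Reversing $(0:_{M}-)$ on $ann(K)\subseteq ann(N)$ gives $(0:_{M}ann(N))\subseteq(0:_{M}ann(K))$, so using $N\subseteq(0:_{M}ann(N))$ I chain $sN\subseteq s(0:_{M}ann(N))\subseteq s(0:_{M}ann(K))\subseteq K$, as required.

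For (iii)$\Rightarrow$(i), given $N$ I instantiate (iii) with first submodule $K=N$ and second submodule $(0:_{M}ann(N))$. The hypothesis that (iii) requires is $ann(N)\subseteq ann((0:_{M}ann(N)))$, which holds (with equality) by the linchpin identity. Then (iii) delivers $s\in S$ with $s(0:_{M}ann(N))\subseteq N$, and taking $I=ann(N)$ recovers both defining inclusions of (i).

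The argument is entirely formal, so the only real obstacle is bookkeeping: keeping track of which direction each inclusion runs after an order-reversing map is applied, and making sure the annihilator inequality $ann(N)\subseteq ann((0:_{M}ann(N)))$ needed to invoke (iii) in the last step is genuinely available. The identity $ann((0:_{M}ann(N)))=ann(N)$ is what legitimizes that step, and it is the one slightly less obvious ingredient worth isolating at the outset.
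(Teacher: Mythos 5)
Your proposal is correct and takes essentially the same route as the paper's proof: the same observation that $N\subseteq(0:_{M}I)$ forces $I\subseteq ann(N)$ drives (i)$\Rightarrow$(ii), and your ``linchpin identity'' $ann((0:_{M}ann(N)))=ann(N)$ is exactly the fact the paper invokes (stating it as clear) to apply (iii) in closing the cycle. The only cosmetic differences are that you apply (ii) just once (to $K$) when proving (iii), where the paper redundantly applies it to both $K$ and $N$, and that you go (iii)$\Rightarrow$(i) directly rather than via the paper's (iii)$\Rightarrow$(ii)$\Rightarrow$(i).
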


\begin{proof}
$(i)\Rightarrow(ii):\ $Suppose that $M\ $is an $S$-comultiplication module and
take a submodule $N$ of $M.\ $Then by definition, there exist $s\in S\ $and an
ideal $I\ $of $R\ $such that $s(0:_{M}I)\subseteq N\subseteq(0:_{M}I).\ $Then
note that $IN=(0)\ $and so $I\subseteq ann(N).\ $This gives that
$s(0:_{M}ann(N))\subseteq s(0:_{M}I)\subseteq N\subseteq(0:_{M}ann(N))\ $which
completes the proof.

$(ii)\Rightarrow(iii):\ $Suppose that $ann(K)\subseteq ann(N)$ for some
submodules $N,K\ $of $M.\ $By (ii), there exist $s_{1},s_{2}\in S$ such that
\begin{align*}
s_{1}(0  &  :_{M}ann(N))\subseteq N\subseteq(0:_{M}ann(N))\\
s_{2}(0  &  :_{M}ann(K))\subseteq K\subseteq(0:_{M}ann(K)).
\end{align*}
Since $ann(K)\subseteq ann(N),\ $we have $(0:_{M}ann(N))\subseteq
(0:_{M}ann(K))\ $and so
\begin{align*}
s_{1}s_{2}(0  &  :_{M}ann(N))\subseteq s_{2}N\subseteq s_{2}(0:_{M}ann(N))\\
&  \subseteq s_{2}(0:_{M}ann(K))\subseteq K
\end{align*}
which completes the proof.

$(iii)\Rightarrow(ii):\ $Suppose that (iii) holds. Let $N\ $be a submodule of
$M.\ $Then it is clear that $ann(N)=ann(0:_{M}ann(N)).\ $Then by (iii), there
exists $s\in S\ $such that $s(0:_{M}ann(N))\subseteq N\subseteq(0:_{M}%
ann(N)).$

$(ii)\Rightarrow(i):\ $It is clear.
\end{proof}

Let $S\ $be a m.c.s of $R.\ $The \textit{saturation} $S^{\star}\ $of $S\ $is
defined by $S^{\star}=\{x\in R:x|s\ $for some $s\in S\}.\ $Also $S\ $is said
to be a \textit{saturated m.c.s} of $R$ if $S=S^{\star}.\ $Note that
$S^{\star}$ is always a satured m.c.s of $R\ $containing $S.$

\begin{proposition}
Let $M$ be an $R$-module and $S$ be a m.c.s of $R$. The following assertions hold.

(i) Let $S_{1}$ and $S_{2}$ be two m.c.s of $R$ and $S_{1}\subseteq S_{2}$. If
$M$ is an $S_{1}$-comultiplication module, then $M$ is also an $S_{2}%
$-comultiplication module.

(ii) $M$ is an $S$-comultiplication module if and only if $M$ is an $S^{\star
}$-comultiplication module, where $S^{\ast}$ is the saturation of $S$.
\end{proposition}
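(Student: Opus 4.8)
The plan is to prove both parts directly from the definition of $S$-comultiplication module, using the characterizations established in Lemma \ref{lma}.

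For part (i), the key observation is that the defining property is essentially monotone in the multiplicatively closed set. Suppose $M$ is an $S_{1}$-comultiplication module and let $N$ be any submodule of $M$. By definition there exist $s\in S_{1}$ and an ideal $I$ of $R$ with $s(0:_{M}I)\subseteq N\subseteq(0:_{M}I)$. Since $S_{1}\subseteq S_{2}$, the same element $s$ lies in $S_{2}$, so the identical inclusions witness that $M$ is an $S_{2}$-comultiplication module. This direction is entirely routine, requiring no real obstacle.

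For part (ii), one direction is immediate: since $S\subseteq S^{\star}$ (as noted just before the proposition), part (i) gives that any $S$-comultiplication module is automatically an $S^{\star}$-comultiplication module. The substantive direction is the converse. The plan is to assume $M$ is an $S^{\star}$-comultiplication module and fix a submodule $N$ of $M$. Then there exist $s^{\star}\in S^{\star}$ and an ideal $I$ with $s^{\star}(0:_{M}I)\subseteq N\subseteq(0:_{M}I)$. By the definition of saturation, $s^{\star}\mid s$ for some $s\in S$, say $s=r s^{\star}$ for some $r\in R$. I would then multiply the left inclusion by $r$: since $r\bigl(s^{\star}(0:_{M}I)\bigr)=(rs^{\star})(0:_{M}I)=s(0:_{M}I)$, and $r\cdot N\subseteq N$ because $N$ is a submodule, applying $r$ to $s^{\star}(0:_{M}I)\subseteq N$ yields $s(0:_{M}I)\subseteq N$. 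The right inclusion $N\subseteq(0:_{M}I)$ is unchanged. Hence $s(0:_{M}I)\subseteq N\subseteq(0:_{M}I)$ with $s\in S$, showing $M$ is an $S$-comultiplication module.

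The main step to get right is the multiplication argument in the converse of part (ii): one must confirm that multiplying the inclusion $s^{\star}(0:_{M}I)\subseteq N$ by the factor $r$ preserves the containment on the right side (which holds because $N$ is closed under scalar multiplication) while transforming $s^{\star}$ into an element of $S$. This is the only place where the saturation structure is genuinely used, and it is the crux of the argument. Everything else reduces to the monotonicity already captured in part (i).
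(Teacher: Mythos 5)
Your proof is correct and follows essentially the same route as the paper: part (i) by monotonicity, the forward direction of (ii) from (i), and the converse by writing $s = r s^{\star}$ and using that submodules absorb scalar multiplication to get $s(0:_{M}I)\subseteq rN\subseteq N$. The only cosmetic difference is that the paper invokes Lemma \ref{lma} to take $I=ann(N)$, whereas you work with the arbitrary ideal $I$ from the definition; the key step is identical.
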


\begin{proof}
(i): Clear.

(ii): Assume that $M$ is an $S$-comultiplication module. Since $S\subseteq
S^{\star}$, the result follows from the part (i).

Suppose $M$ is an $S^{\star}$-comultiplication module. Take a submodule $N$ of
$M$. Since $M$ is $S^{\star}$-comultiplication module, there exists $x\in
S^{\star}$ such that $x(0:_{M}ann(N))\subseteq N\subseteq(0:_{M}ann(N))$ by
Lemma \ref{lma}. Since $x\in S^{\star}$, there exists $s\in S$ such that
$x|s$, that is, $s=rx$ for some $r\in R$. This implies that $s(0:_{M}%
ann(N))\subseteq x(0:_{M}ann(N))\subseteq N\subseteq(0:_{M}ann(N))$. Thus, $M$
is an $S$-comultiplication module.
\end{proof}

Anderson and Dumitrescu, in 2002, defined the concept of $S$-Noetherian rings
which is a generalization of Noetherian rings and they extended many
properties of Noetherian rings to $S$-Noetherian rings. Recall from
\cite{AnDu} that a submodule $N\ $of $M\ $is said to be an $S$\textit{-finite
submodule} if there exists a finitely generated submodule $K\ $of $M\ $such
that $sN\subseteq K\subseteq N.\ $Also, $M\ $is said to be an $S$%
\textit{-Noetherian module} if its each submodule is $S$-finite. In
particular, $R\ $is said to be an $S$-Neotherian ring if it is an
$S$-Noetherian $R$-module.

\begin{proposition}
Let $R$ be an $S$-Noetherian ring and $M$ be an $S$-comultiplication module.
Then $S^{-1}M$ is a comultiplication module.
\end{proposition}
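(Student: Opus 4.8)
The plan is to reduce to the comultiplication criterion recorded in the introduction: $S^{-1}M$ is a comultiplication $S^{-1}R$-module if and only if $N'=(0:_{S^{-1}M}ann(N'))$ for every $S^{-1}R$-submodule $N'$ of $S^{-1}M$. I would start from the standard fact that every such $N'$ has the form $N'=S^{-1}N$ for some submodule $N$ of $M$ (take the contraction $N=\{m\in M:\tfrac{m}{1}\in N'\}$). So it suffices to fix a submodule $N$ of $M$ and verify $S^{-1}N=(0:_{S^{-1}M}ann(S^{-1}N))$, where the annihilator is computed in $S^{-1}R$.

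First I would exploit the hypothesis on $M$. Writing $J=ann(N)$, Lemma \ref{lma} furnishes $s\in S$ with $s(0:_{M}J)\subseteq N\subseteq(0:_{M}J)$. Localizing and using that $s$ becomes a unit in $S^{-1}R$, both inclusions collapse to the single equality $S^{-1}N=S^{-1}(0:_{M}J)$. The inclusion $S^{-1}N\subseteq(0:_{S^{-1}M}ann(S^{-1}N))$ is automatic, and since $S^{-1}J\subseteq ann(S^{-1}N)$ one gets $(0:_{S^{-1}M}ann(S^{-1}N))\subseteq(0:_{S^{-1}M}S^{-1}J)$. Thus the whole problem reduces to proving $(0:_{S^{-1}M}S^{-1}J)\subseteq S^{-1}(0:_{M}J)$, i.e.\ that localization commutes with the residual $(0:_{M}-)$ at the ideal $J$.

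This is exactly where the $S$-Noetherian hypothesis enters, and it is the crux of the argument: $J=ann(N)$ need not be finitely generated, so the residual does not localize well on the nose. Since $R$ is $S$-Noetherian, $J$ is $S$-finite, so there are a finitely generated ideal $K=(a_{1},\dots,a_{n})\subseteq J$ and $s'\in S$ with $s'J\subseteq K$. For the finitely generated $K$ one has $(0:_{M}K)=\bigcap_{i}(0:_{M}a_{i})$, and because localization commutes with finite intersections this yields $S^{-1}(0:_{M}K)=(0:_{S^{-1}M}S^{-1}K)$. Moreover $s'$ is a unit in $S^{-1}R$ and $s'J\subseteq K\subseteq J$ force $S^{-1}K=S^{-1}J$, so $(0:_{S^{-1}M}S^{-1}J)=S^{-1}(0:_{M}K)$.

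It remains to compare the two residuals after localization. The inclusion $S^{-1}(0:_{M}J)\subseteq S^{-1}(0:_{M}K)$ is clear from $K\subseteq J$. For the reverse, if $Km=0$ then $J(s'm)\subseteq Km=0$, so $s'm\in(0:_{M}J)$ and hence $\tfrac{m}{t}=\tfrac{s'm}{s't}\in S^{-1}(0:_{M}J)$; thus $S^{-1}(0:_{M}K)=S^{-1}(0:_{M}J)$. Chaining these identities gives $(0:_{S^{-1}M}ann(S^{-1}N))\subseteq(0:_{S^{-1}M}S^{-1}J)=S^{-1}(0:_{M}K)=S^{-1}(0:_{M}J)=S^{-1}N$, which together with the trivial inclusion proves the required equality and hence that $S^{-1}M$ is a comultiplication module. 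The main obstacle throughout is the possible non-finite-generation of $ann(N)$; the entire proof hinges on replacing it by its finitely generated $S$-approximation $K$ and showing the associated residuals coincide once we localize.
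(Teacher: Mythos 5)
Your proof is correct and follows essentially the same strategy as the paper: write the submodule as $S^{-1}N$, localize the $S$-comultiplication sandwich so the factor $s$ becomes a unit, and then use the $S$-Noetherian hypothesis (via an $S$-finite approximation $K=(a_{1},\dots,a_{n})$ of the relevant ideal) to show that the residual $(0:_{M}-)$ commutes with localization. The only differences are presentational — you verify the annihilator criterion $N'=(0:_{S^{-1}M}\mathrm{ann}(N'))$ and modularize the key step into standard facts about finitely generated ideals, whereas the paper exhibits $W$ directly as $(0:_{S^{-1}M}S^{-1}I)$ by a single element chase combining the $s_{i}$'s — but the ingredients and their roles are identical.
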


\begin{proof}
Let $W$ be a submodule of $S^{-1}M$. Then, $W=S^{-1}N$ for some submodule $N$
of $M$. Since $M$ is an $S$-comultiplication module, there exists $s\in S$
such that $s(0:_{M}I)\subseteq N\subseteq(0:_{M}I)$ for some ideal $I$ of $R$.
Then, we get $S^{-1}(s(0:_{M}I))=S^{-1}((0:_{M}I))\subseteq S^{-1}N\subseteq
S^{-1}((0:_{M}I))$ that is $S^{-1}N=S^{-1}((0:_{M}I))$. Now, we will show that
$S^{-1}((0:_{M}I))=(0:_{S^{-1}M}S^{-1}I)$. Let $\frac{m}{s^{\prime}}\in
S^{-1}((0:_{M}I))$ where $m\in(0:_{M}I)$ and $s^{\prime}\in S$. Then, we have
$Im=(0)$ and so $(S^{-1}I)(\frac{m}{s^{\prime}})=(0)$. This implies that
$\frac{m}{s^{\prime}}\in(0:_{S^{-1}M}S^{-1}I)$. For the converse, let
$\frac{m}{s^{\prime}}\in(0:_{S^{-1}M}S^{-1}I)$. Then, we have $(S^{-1}%
I)(\frac{m}{s^{\prime}})=(0)$. This implies that, for each $x\in I$, there
exists $s^{\prime\prime}\in S$ such that $s^{\prime\prime}xm=0$. Since $R$ is
an $S$-Noetherian ring, $I$ is $S$-finite. So, there exists $s^{\star}\in S$
and $a_{1},a_{2},\ldots,a_{n}\in I$ such that $s^{\star}I\subseteq(a_{1}%
,a_{2},\ldots,a_{n})\subseteq I$. As $\left(  S^{-1}I\right)  (\frac
{m}{s^{\prime}})=(0)$ and $a_{i}\in I$, there exists $s_{i}\in S$ such that
$s_{i}a_{i}m=0$. Now, put $t=s_{1}s_{2}\cdots s_{n}s^{\star}\in S$. Then we
have $ta_{i}m=0$ for all $a_{i}$ and so $tIm=0$. Then we deduce $\frac
{m}{s^{\prime}}=\frac{tm}{ts^{\prime}}\in S^{-1}((0:_{M}I))$. Thus,
$S^{-1}((0:_{M}I))=(0:_{S^{-1}M}S^{-1}I)$ and so $W=S^{-1}N=(0:_{S^{-1}%
M}S^{-1}I)$. Therefore, $S^{-1}M$ is a comultiplication module.
\end{proof}

Recall from \cite{AnArTeKo} that a m.c.s $S$ of $R$ is said to satisfy
\textit{maximal multiple condition} if there exists $s\in S$ such that $t$
divides $s$ for each $t\in S$.

\begin{theorem}
\label{tloc}Let $M$ be an $R$-module and $S$ be a m.c.s. of $R$ satisfying
maximal multiple condition. Then, $M$ is an $S$-comultiplication module if and
only if $S^{-1}M$ is a comultiplication module.
\end{theorem}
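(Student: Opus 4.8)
The plan is to prove both implications by exploiting the maximal multiple condition, which supplies a single element $s_{0}\in S$ such that $t\mid s_{0}$ for every $t\in S$. The role of $s_{0}$ is to replace the pointwise choices of denominators coming from localization by one uniform element of $S$: whenever $tm=0$ (or $tm\in N$) for some $t\in S$ depending on $m$, writing $s_{0}=rt$ yields $s_{0}m=0$ (resp.\ $s_{0}m\in N$) for that same $m$, so a single $s_{0}$ works simultaneously for all the relevant elements. This is exactly the tool that lets us dispense with the $S$-Noetherian hypothesis used in the preceding proposition, and I would state it as a one-line observation before using it in both directions.

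For the forward implication, suppose $M$ is an $S$-comultiplication module and let $W$ be a submodule of $S^{-1}M$. Write $W=S^{-1}N$ for a submodule $N$ of $M$, and apply the definition to obtain $s\in S$ and an ideal $I$ of $R$ with $s(0:_{M}I)\subseteq N\subseteq(0:_{M}I)$. Localizing and using that $s$ is a unit in $S^{-1}R$ gives $S^{-1}N=S^{-1}(0:_{M}I)$. It then remains to identify $S^{-1}(0:_{M}I)$ with $(0:_{S^{-1}M}S^{-1}I)$, which exhibits $W$ in the required form. The inclusion $S^{-1}(0:_{M}I)\subseteq(0:_{S^{-1}M}S^{-1}I)$ is routine; for the reverse, given $\frac{m}{s'}\in(0:_{S^{-1}M}S^{-1}I)$ one has, for each $x\in I$, some $u_{x}\in S$ with $u_{x}xm=0$, and the maximal multiple $s_{0}$ collapses these to $s_{0}Im=0$, so $s_{0}m\in(0:_{M}I)$ and $\frac{m}{s'}=\frac{s_{0}m}{s_{0}s'}\in S^{-1}(0:_{M}I)$.

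For the converse, I would verify criterion (iii) of Lemma \ref{lma}: given submodules $K,N$ of $M$ with $ann(K)\subseteq ann(N)$, produce $s\in S$ with $sN\subseteq K$. First I would show $ann_{S^{-1}R}(S^{-1}K)\subseteq ann_{S^{-1}R}(S^{-1}N)$. Indeed, if $\frac{a}{s}$ annihilates $S^{-1}K$, then for each $k\in K$ some element of $S$ kills $ak$, so $s_{0}a$ kills every $k$, giving $s_{0}a\in ann(K)\subseteq ann(N)$ and hence $\frac{a}{s}\in ann_{S^{-1}R}(S^{-1}N)$. Since $S^{-1}M$ is a comultiplication module, the order-reversing relationship $W=(0:_{S^{-1}M}ann(W))$ between submodules and their annihilators then forces $S^{-1}N\subseteq S^{-1}K$. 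Finally, for each $n\in N$ the relation $\frac{n}{1}\in S^{-1}K$ furnishes $t_{n}\in S$ with $t_{n}n\in K$, and $s_{0}$ again uniformizes this to $s_{0}N\subseteq K$, as desired.

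I expect the main obstacle to lie in the two places where localization fails to commute with the formation of annihilators and colon modules in general: namely $S^{-1}(0:_{M}I)=(0:_{S^{-1}M}S^{-1}I)$ in the forward direction and $ann_{S^{-1}R}(S^{-1}K)\subseteq ann_{S^{-1}R}(S^{-1}N)$ in the converse. These are the only steps that genuinely require a hypothesis beyond bare localization, and in each the maximal multiple condition is precisely what converts the family of locally chosen denominators into one global element $s_{0}$. Once that uniformization is in hand, the remaining inclusions and the appeal to the comultiplication structure of $S^{-1}M$ are formal.
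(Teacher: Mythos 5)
Your proposal is correct. The forward direction is essentially the paper's own argument: both reduce to showing $S^{-1}(0:_{M}I)=(0:_{S^{-1}M}S^{-1}I)$, and both use the maximal multiple element $s_{0}$ to collapse the element-wise denominators $u_{x}$ into a single $s_{0}$ with $s_{0}Im=0$; your factoring through $S^{-1}N=S^{-1}(0:_{M}I)$ (since $s$ is a unit in $S^{-1}R$) versus the paper's direct two-inclusion check is a cosmetic difference. The converse, however, is a genuinely different route. The paper argues directly: starting from $S^{-1}N=(0:_{S^{-1}M}S^{-1}I)$, it uses $s_{0}$ to get $s_{0}IN=0$, hence $N\subseteq(0:_{M}s_{0}I)$, and then a second application of the maximal multiple condition pushes elements of $(0:_{M}s_{0}I)$ back into $N$, explicitly exhibiting the witnessing ideal $s_{0}I$ with $s_{0}(0:_{M}s_{0}I)\subseteq N\subseteq(0:_{M}s_{0}I)$. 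You instead verify criterion (iii) of Lemma \ref{lma}: from $ann(K)\subseteq ann(N)$ you deduce $ann_{S^{-1}R}(S^{-1}K)\subseteq ann_{S^{-1}R}(S^{-1}N)$ (uniformizing with $s_{0}$), invoke the characterization $W=(0:_{S^{-1}M}ann(W))$ of comultiplication modules to get $S^{-1}N\subseteq S^{-1}K$, and uniformize once more to obtain $s_{0}N\subseteq K$; all three steps check out. Your route buys conceptual economy --- it reuses the lemma's annihilator-domination criterion and never has to manufacture an ideal --- while the paper's buys self-containedness and an explicit ideal witnessing the $S$-comultiplication condition; both exploit the maximal multiple condition for exactly the same uniformization purpose.
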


\begin{proof}
$(\Rightarrow):\ $Suppose that $W\ $is a submodule of $S^{-1}M.\ $Then
$W=S^{-1}N\ $for some submodule $N\ $of $M.\ $Since $M\ $is an $S$%
-comultiplication module, there exist $t^{\prime}\in S\ $and an ideal $I$ of
$R\ $such that $t^{\prime}(0:_{M}I)\subseteq N\subseteq(0:_{M}I).\ $This
implies that $IN=(0)\ $and so $S^{-1}(IN)=(S^{-1}I)(S^{-1}N)=0.\ $Then we have
$S^{-1}N\subseteq(0:_{S^{-1}M}S^{-1}I).\ $Let $\frac{m^{\prime}}{s^{\prime}%
}\in(0:_{S^{-1}M}S^{-1}I).\ $Then we get $\frac{a}{1}\frac{m^{\prime}%
}{s^{\prime}}=0$ for each $a\in I\ $and this yields that $uam^{\prime}=0$ for
some $u\in S.\ $As $S\ $satisfies maximal multiple condition, there exists
$s\in S\ $such that $u|s\ $for each $u\in S.\ $This implies that $s=ux$ for
some $x\in R.\ $Then we have $sam^{\prime}=xuam^{\prime}=0.\ $Then we conclude
that $Ism^{\prime}=0$ and so $sm^{\prime}\in(0:_{M}I).\ $This yields that
$t^{\prime}sm^{\prime}\in t^{\prime}(0:_{M}I)\subseteq N\ $and so
$\frac{m^{\prime}}{s^{\prime}}=\frac{t^{\prime}sm^{\prime}}{t^{\prime
}ss^{\prime}}\in S^{-1}N.\ $Then we get $S^{-1}N=(0:_{S^{-1}M}S^{-1}I)$ and so
$S^{-1}M$ is a comultiplication module.

$(\Leftarrow):\ $Suppose that $S^{-1}M$ is a comultiplication module. Let
$N\ $be a submodule of $M.\ $Since $S^{-1}M$ is comultiplication,
$S^{-1}N=(0:_{S^{-1}M}S^{-1}I)\ $for some ideal $I\ $of $R.\ $Then we have
$(S^{-1}I)(S^{-1}N)=S^{-1}(IN)=0.\ $Then for each $a\in I,m\in N,\ $we have
$\frac{am}{1}=0\ $and thus $uam=0\ $for some $u\in S.\ $By maximal multiple
condition, there exists $s\in S\ $such that $sam=0\ $and so $sIN=0.\ $This
implies that $N\subseteq(0:_{M}sI).\ $Now, let $m\in(0:_{M}sI).\ $Then
$Ism=0\ $so it is easily seen that $(S^{-1}I)\frac{m}{1}=0.\ $Then we conclude
that $\frac{m}{1}\in(0:_{S^{-1}M}S^{-1}I)=S^{-1}N.\ $Then there exists $x\in
S\ $such that $xm\in N.\ $Again by maximal multiple condition, $sm\in N.$ Then
we have $s(0:_{M}sI)\subseteq N\subseteq(0:_{M}sI).\ $Since $sI\ $is an ideal
of $R,$ $M\ $is an $S$-comultiplication module.
\end{proof}

\begin{theorem}
Let $f:M\rightarrow M^{\prime}$ be an $R$-homomorphism and $tKer(f)=(0)$ for
some $t\in S$.

(i)\ If $M^{\prime}$ is an $S$-comultiplication module, then $M$ is an
$S$-comultiplication module.

(ii)\ If $f$ is an $R$-epimorphism and $M$ is an $S$-comultiplication module,
then $M^{\prime}$ is an $S$-comultiplication module.
\end{theorem}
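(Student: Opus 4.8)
The plan is to prove both parts directly from the definition of $S$-comultiplication module, transporting submodules across $f$ and using the hypothesis $t\,Ker(f)=(0)$ to absorb the kernel at the cost of a single factor of $t\in S$. The recurring mechanism is this: whenever an element lands in $Ker(f)$, multiplying by $t$ sends it to $0$, so an equality that holds only modulo $Ker(f)$ can be upgraded to a genuine equality after multiplying through by $t$. (One could instead route everything through the characterization in Lemma \ref{lma}, but the direct approach keeps the placement of the $t$'s transparent.)

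For part (i), I would start with an arbitrary submodule $N$ of $M$ and push it forward to the submodule $f(N)$ of $M^{\prime}$. Since $M^{\prime}$ is $S$-comultiplication, there exist $s^{\prime}\in S$ and an ideal $I$ of $R$ with $s^{\prime}(0:_{M^{\prime}}I)\subseteq f(N)\subseteq(0:_{M^{\prime}}I)$. I then set $J=tI$ and claim $s(0:_{M}J)\subseteq N\subseteq(0:_{M}J)$ for a suitable $s$. The right-hand inclusion is easy: from $f(N)\subseteq(0:_{M^{\prime}}I)$ one gets $IN\subseteq Ker(f)$, hence $JN=tIN\subseteq t\,Ker(f)=(0)$, so $N\subseteq(0:_{M}J)$. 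For the left-hand inclusion, take $m\in(0:_{M}J)$; then $f(tm)\in(0:_{M^{\prime}}I)$, so $s^{\prime}f(tm)\in f(N)$, i.e. $f(s^{\prime}tm)=f(n)$ for some $n\in N$, and multiplying $s^{\prime}tm-n\in Ker(f)$ by $t$ gives $t^{2}s^{\prime}m=tn\in N$. Thus $s=t^{2}s^{\prime}\in S$ works, and no surjectivity is needed.

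For part (ii), I would take an arbitrary submodule $W$ of $M^{\prime}$ and pull it back to $N=f^{-1}(W)$, a submodule of $M$; here surjectivity gives $f(N)=W$. Applying the $S$-comultiplication property of $M$ to $N$ yields $s\in S$ and an ideal $I$ with $s(0:_{M}I)\subseteq f^{-1}(W)\subseteq(0:_{M}I)$. From the right inclusion, $IW=0$ (lifting each $w\in W$ to some $m\in f^{-1}(W)$ via surjectivity and using $Im=0$), so $W\subseteq(0:_{M^{\prime}}I)$. For the reverse, I first establish $t(0:_{M^{\prime}}I)\subseteq f((0:_{M}I))$: given $w^{\prime}\in(0:_{M^{\prime}}I)$ write $w^{\prime}=f(m)$, so $Im\subseteq Ker(f)$ and hence $tm\in(0:_{M}I)$, giving $tw^{\prime}=f(tm)\in f((0:_{M}I))$. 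Combining this with $f(s(0:_{M}I))\subseteq W$ yields $st(0:_{M^{\prime}}I)\subseteq W$, so $s^{\prime\prime}=st\in S$ completes the argument.

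The main obstacle, and the only genuinely delicate point, is bookkeeping the placement of the factor $t$ needed to clear $Ker(f)$: one must pass to the ideal $tI$ rather than $I$ in part (i), and lift elements of $(0:_{M^{\prime}}I)$ through $f$ only after multiplying by $t$ in part (ii). It is worth highlighting the asymmetry between the two parts: part (i) requires no surjectivity at all, whereas part (ii) uses it twice, both to guarantee $f(f^{-1}(W))=W$ and to realize each element of $(0:_{M^{\prime}}I)$ as $f(m)$ for some $m\in M$.
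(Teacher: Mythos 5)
Your proof is correct and follows essentially the same route as the paper's: in part (i) you transport $N$ to $f(N)$, pass to the ideal $J=tI$, and clear the kernel with the scalar $t^{2}s^{\prime}$, exactly as in the paper; in part (ii) you pull back to $f^{-1}(W)$, prove $t(0:_{M^{\prime}}I)\subseteq f((0:_{M}I))$, and conclude with the scalar $st$, again matching the paper's argument. The bookkeeping of the factors of $t$ and the observation that surjectivity is only needed in part (ii) are both accurate.
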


\begin{proof}
(i)\ Let $N$ be a submodule of $M$. Since $M^{\prime}$ is an $S$%
-comultiplication module, there exist $s\in S$ and an ideal $I$ of $R$ such
that $s(0:_{M^{\prime}}I)\subseteq f(N)\subseteq(0:_{M^{\prime}}I)$. Then, we
have $If(N)=f(IN)=0$ and so $IN\subseteq Kerf$. Since $tKer(f)=0$, we have
$tIN=(0)$ and so $N\subseteq(0:_{M}tI)$. Now, we will show that $t^{2}%
s(0:_{M}tI)\subseteq N\subseteq(0:_{M}tI)$. Let $m\in(0:_{M}tI)$. Then, we
have $tIm=0$ and so $f(tIm)=tIf(m)=If(tm)=0$. This implies that $f(tm)\in
(0:_{M^{\prime}}I)$. Thus, we have $sf(tm)=f(stm)\in s(0:_{M^{\prime}%
}I)\subseteq f(N)$ and so there exists $y\in N$ such that $f(stm)=f(y)$ and so
$stm-y\in Ker(f)$. Thus, we have $t(stm-y)=0$ and so $t^{2}sm=tx$. Then we
obtain
\[
t^{2}s(0:_{M}tI)\subseteq tN\subseteq N\subseteq(0:_{M}tI).
\]
Now, put $t^{2}s=s^{\prime}\in S$ and $J=tI$. Thus,
\[
s^{\prime}(0:_{M}J)\subseteq N\subseteq(0:_{M}J).
\]
Therefore, $M$ is an $S$-comultiplication module.

(ii)\ Let $N^{\prime}$ be a submodule of $M^{\prime}$. Since $M$ is an
$S$-comultiplication module, there exist $s\in S$ and an ideal $I$ of $R$ such
that
\[
s(0:_{M}I)\subseteq f^{-1}(N^{\prime})\subseteq(0:_{M}I).
\]
This implies that $If^{-1}(N^{\prime})=(0)$ and so $f(If^{-1}(N^{\prime
}))=IN^{\prime}=(0)$ since $f$ is surjective. Then, we have $N^{\prime
}\subseteq(0:_{M^{\prime}}I)$. On the other hand, we get $f(s(0:_{M}%
I))=sf((0:_{M}I))\subseteq f(f^{-1}(N^{\prime}))=N^{\prime}$. Now, let
$m^{\prime}\in(0:_{M^{\prime}}I)$. Then, $Im^{\prime}=0$. Since, $f$ is
epimorphism, there exists $m\in M$ such that $m^{\prime}=f(m)$. Then, we have
$Im^{\prime}=If(m)=f(Im)=0$ and so $Im\subseteq Kerf$. Since $tKer(f)=0$, we
have $tIm=(0)$ and so $tm\in(0:_{M}I)$. Then we get $f(tm)=tf(m)=tm^{\prime
}\in f((0:_{M}I))$. Thus, we have $t(0:_{M^{\prime}}I)\subseteq f((0:_{M}I))$
and hence $st(0:_{M^{\prime}}I)\subseteq sf((0:_{M}I))\subseteq N^{\prime
}\subseteq(0:_{M^{\prime}}I)$. Thus, $M^{\prime}$ is an $S$-comultiplication module.
\end{proof}

As an immediate consequences of previous theorem, we give the following
explicit results.

\begin{corollary}
Let $M\ $be an $R$-module,$\ N$ be a submodule of $M\ $and $S$ be a m.c.s of
$R$. Then we have the following.

(i) If $M\ $is an $S$-comultiplication module, then $N\ $is an $S$%
-comultiplication module.

(ii)\ If $M\ $is an $S$-comultiplication module and $tM\subseteq N\ $for some
$t\in S,\ $then $M/N\ $is an $S$-comultiplication $R$-module.
\end{corollary}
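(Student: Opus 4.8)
The plan is to realize the submodule and the quotient as the source and target of natural maps and then invoke the preceding theorem, supplementing part (ii) with the trivial example from the start of this section. For part (i), I would take $f$ to be the inclusion map $\iota\colon N\hookrightarrow M$. This is an $R$-homomorphism with $Ker(\iota)=(0)$, so the hypothesis $t\,Ker(f)=(0)$ of the preceding theorem holds trivially with $t=1\in S$. In the notation of that theorem, the ambient module $M$ plays the role of $M^{\prime}$ and the submodule $N$ plays the role of $M$; since $M$ is $S$-comultiplication, part (i) of the theorem immediately yields that $N$ is $S$-comultiplication, with no further computation required.

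For part (ii), the key observation is that the hypothesis $tM\subseteq N$ forces $t(M/N)=(tM+N)/N=(0)$, so $t\in ann(M/N)\cap S$ and in particular $ann(M/N)\cap S\neq\emptyset$. By the first example of this section, every $R$-module whose annihilator meets $S$ is trivially an $S$-comultiplication module; hence $M/N$ is $S$-comultiplication. One could instead try to run the quotient map $\pi\colon M\to M/N$ through part (ii) of the theorem, but that route needs the kernel condition $t\,Ker(\pi)=tN=(0)$, which is not available in general, so the annihilator argument via the example is the clean path.

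The only point that needs care, and the thing most likely to trip one up, is the bookkeeping of which module plays the role of $M$ and which plays $M^{\prime}$ in the application to part (i): the inclusion runs from the submodule into the ambient module, so the hypothesized $S$-comultiplication module is the \emph{codomain} $M^{\prime}$, and it is the \emph{domain} $N$ that inherits the property. Once the directions are matched correctly and the identity $t(M/N)=(0)$ is recorded for part (ii), both statements follow immediately.
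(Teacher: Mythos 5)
Your proof is correct. Part (i) is exactly the argument the paper intends: the corollary is presented as an immediate consequence of the preceding theorem, and for (i) this means applying part (i) of that theorem to the inclusion $\iota\colon N\hookrightarrow M$, with $Ker(\iota)=(0)$ and the role bookkeeping you describe ($M$ is the codomain $M^{\prime}$, $N$ the domain); your care on that point is well placed. For part (ii) you take a genuinely different route: rather than passing through the theorem, you observe that $tM\subseteq N$ gives $t(M/N)=(tM+N)/N=(0)$, so $ann(M/N)\cap S\neq\emptyset$ and the first example of the section applies. This is valid, and it is in fact sharper than the statement, since it never uses the hypothesis that $M$ is $S$-comultiplication: any quotient $M/N$ with $tM\subseteq N$ for some $t\in S$ is trivially $S$-comultiplication. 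You are also right that the projection $\pi\colon M\to M/N$ cannot be fed into part (ii) of the theorem, as that would require $t\,Ker(\pi)=tN=(0)$, which is not assumed. If one insists on deriving (ii) from the theorem, as the paper's phrasing suggests, the way to do it is to apply part (i) of the theorem to the zero homomorphism $f\colon M/N\to M$: its kernel is all of $M/N$, and $t\,Ker(f)=t(M/N)=(0)$ precisely because $tM\subseteq N$, so $M/N$ inherits the property from $M$. Both routes are one-line arguments; yours has the merit of exposing that (ii) is really an instance of the trivial example, while the theorem-based route keeps the corollary formally tied to the theorem as the paper advertises.
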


\begin{proposition}
\label{pcar}Let $M_{i}\ $be an $R_{i}$-module and $S_{i}$ be a m.c.s of
$R_{i}\ $for each $i=1,2.\ $Suppose that $M=M_{1}\times M_{2},\ R=R_{1}\times
R_{2}\ $and $S=S_{1}\times S_{2}.$\ The following assertions are equivalent.

(i) $M$ is an $S$-comultiplication $R$-module.

(ii) $M_{1}$ is an $S_{1}$-comultiplication $R_{1}$-module and $M_{2}$ is an
$S_{2}$-comultiplication $R_{2}$-module.
\end{proposition}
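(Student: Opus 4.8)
The plan is to reduce everything to the basic structural fact that, over the product ring $R=R_{1}\times R_{2}$, both submodules and ideals split as products and the residual operation $(0:_{M}-)$ respects this splitting. First I would record the three identities I will use repeatedly: every submodule $N$ of $M=M_{1}\times M_{2}$ has the form $N=N_{1}\times N_{2}$ where $N_{i}$ is a submodule of $M_{i}$; every ideal $I$ of $R$ has the form $I=I_{1}\times I_{2}$ where $I_{i}$ is an ideal of $R_{i}$; and for such an $I$,
\[
(0:_{M}I)=(0:_{M_{1}}I_{1})\times(0:_{M_{2}}I_{2}).
\]
All three are routine componentwise checks. Since the elements of $S=S_{1}\times S_{2}$ are pairs $s=(s_{1},s_{2})$ with $s_{i}\in S_{i}$ and scalar multiplication is componentwise, I also obtain $s(0:_{M}I)=\bigl(s_{1}(0:_{M_{1}}I_{1})\bigr)\times\bigl(s_{2}(0:_{M_{2}}I_{2})\bigr)$.

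For the direction $(ii)\Rightarrow(i)$, I would take an arbitrary submodule $N=N_{1}\times N_{2}$ of $M$. Applying the $S_{i}$-comultiplication hypothesis to each $N_{i}$ yields $s_{i}\in S_{i}$ and an ideal $I_{i}$ of $R_{i}$ with $s_{i}(0:_{M_{i}}I_{i})\subseteq N_{i}\subseteq(0:_{M_{i}}I_{i})$. Setting $s=(s_{1},s_{2})\in S$ and $I=I_{1}\times I_{2}$, the product identities above give $s(0:_{M}I)\subseteq N\subseteq(0:_{M}I)$ directly, which is exactly the defining condition for $M$ to be $S$-comultiplication.

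For $(i)\Rightarrow(ii)$, I would show $M_{1}$ is $S_{1}$-comultiplication, the argument for $M_{2}$ being symmetric. Given a submodule $N_{1}$ of $M_{1}$, I would feed the submodule $N_{1}\times M_{2}$ into the $S$-comultiplication property of $M$, producing $s=(s_{1},s_{2})\in S$ and $I=I_{1}\times I_{2}$ with $s(0:_{M}I)\subseteq N_{1}\times M_{2}\subseteq(0:_{M}I)$. Reading off the first coordinate via the product identities yields $s_{1}(0:_{M_{1}}I_{1})\subseteq N_{1}\subseteq(0:_{M_{1}}I_{1})$, so $M_{1}$ is $S_{1}$-comultiplication.

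The computation is essentially bookkeeping, so there is no serious obstacle; the one point deserving care is the residual identity $(0:_{M}I)=(0:_{M_{1}}I_{1})\times(0:_{M_{2}}I_{2})$, since this is precisely what transfers the containments between the product and its factors in both directions. Once that identity and the splitting of submodules and ideals are in hand, both implications follow by reading the defining inclusions coordinatewise.
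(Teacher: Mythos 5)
Your proof is correct and follows essentially the same route as the paper: both directions rest on the splitting of submodules, ideals, and residuals $(0:_{M}I)=(0:_{M_{1}}I_{1})\times(0:_{M_{2}}I_{2})$ over the product ring. The only (immaterial) difference is that for $(i)\Rightarrow(ii)$ you test the submodule $N_{1}\times M_{2}$, whereas the paper uses $N_{1}\times\{0\}$; either choice yields the first-coordinate inclusions equally well.
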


\begin{proof}
$(i)\Rightarrow(ii):\ $Assume that $M$ is an $S$-comultiplication $R$-module.
Take a submodule $N_{1}\ $of $M_{1}$. Then, $N_{1}\times\{0\}$ is a submodule
of $M$. Since $M$ is an $S$-comultiplication module, there exist
$s=(s_{1},s_{2})\in S_{1}\times S_{2}$ and an ideal $J=I_{1}\times I_{2}\ $of
$R$ such that $(s_{1},s_{2})(0:_{M}I_{1}\times I_{2})\subseteq N_{1}%
\times\{0\}\subseteq(0:_{M}I_{1}\times I_{2})$,$\ $where $I_{i}\ $is an ideal
of $R_{i}$. Then we can easily get $s_{1}(0:_{M_{1}}I_{1})\subseteq
N_{1}\subseteq(0:_{M_{1}}I_{1})$ which shows that $M_{1}$ is an $S_{1}%
$-comultiplication module. Similarly, taking a submodule $N_{2}\ $of $M_{2}$
and a submodule $\{0\}\times N_{2}$ of $M$, we can show that $M_{2}$ is an
$S_{2}$-comultiplication module.

$(ii)\Rightarrow(i):\ $Now, assume that $M_{1}$ is an $S_{1}$-comultiplication
module and $M_{2}$ is an $S_{2}$-comultiplication module. Let $N\ $be a
submodule of $M.\ $Then we can write $N=N_{1}\times N_{2}\ $for some submodule
$N_{i}\ $of $M_{i}$.\ Since $M_{1}$ is an $S_{1}$-comultiplication module,
\[
s_{1}(0:_{M_{1}}I_{1})\subseteq N_{1}\subseteq(0:_{M_{1}}I_{1})
\]
for some ideal $I_{1}$ of $R_{1}$ and $s_{1}\in S_{1}$. Since $M_{2}$ is an
$S_{2}$-comultiplication module,
\[
s_{2}(0:_{M_{2}}I_{2})\subseteq N_{2}\subseteq(0:_{M_{2}}I_{2})
\]
for some ideal $I_{2}$ of $R_{2}$ and $s_{2}\in S_{2}$. Put $s=(s_{1}%
,s_{2})\in S$. Then,
\begin{align*}
s(0  &  :_{M}I_{1}\times I_{2})=s_{1}(0:_{M_{1}}I_{1})\times s_{2}(0:_{M_{2}%
}I_{2})\\
&  \subseteq N_{1}\times N_{2}\subseteq(0:_{M_{1}}I_{1})\times(0:_{M_{2}}%
I_{2})=(0:_{M}I_{1}\times I_{2})
\end{align*}
where $I_{1}\times I_{2}$ is an ideal of $R$ and $(s_{1},s_{2})\in S$, as needed.
\end{proof}

\begin{theorem}
Let $M=M_{1}\times M_{2}\times\cdots\times M_{n}$ be an $R=R_{1}\times
R_{2}\times\cdots\times R_{n}$ module and $S=S_{1}\times S_{2}\times
\cdots\times S_{n}$ be a m.c.s. of $R$ where $M_{i}$ are $R_{i}$-modules and
$S_{i}$ are m.c.s of $R_{i}$ for all $i\in\{{1,2,...,n\}}$, respectively. The
following statements are equivalent.

(i) $M\ $is an $S$-comultiplication $R$-module.

(ii) $M_{i}$ is an $S_{i}$-comultiplication $R_{i}$-module for each
$i=1,2,\ldots,n$.
\end{theorem}

\begin{proof}
Here, induction can be applied on $n$. The statement is true when $n=1$. If
$n=2$, result follows from Proposition \ref{pcar}. Assume that statements are
equivalent for each $k<n$. We will show that it also holds for $k=n$. Now, put
$M^{\prime}=M_{1}\times M_{2}\times\cdots\times M_{n-1},\ R=R_{1}\times
R_{2}\times\cdots\times R_{n-1}\ $and $S=S_{1}\times S_{2}\times\cdots\times
S_{n-1}.\ $Note that $M=M^{\prime}\times M_{n},\ R=R^{\prime}\times R_{n}%
\ $and $S=S^{\prime}\times S_{n}.\ $Then by Proposition \ref{pcar},\ $M\ $is
an $S$-comultiplication $R$-module if and only if $M^{\prime}\ $is an
$S^{\prime}$-comultiplication $R^{\prime}$-module and $M_{n}\ $is an $S_{n}%
$-comultiplication $R_{n}$-module. The rest follows from induction hypothesis.
\end{proof}

Let $p$ be a prime ideal of $R.\ $Then we know that $S_{p}=(R-p)\ $is a m.c.s
of $R.\ $If an $R$-module $M\ $is an $S_{p}$-comultiplication for a prime
ideal $p\ $of $R,\ $then we say that $M\ $is a $p$-comultiplication module.
Now, we will characterize comultiplication modules in terms of $S$%
-comultiplication modules.

\begin{theorem}
\label{tcom}Let $M\ $be an $R$-module. The following statements are equivalent.

(i)\ $M\ $is a comultiplication module.

(ii)\ $M\ $is a $\mathcal{P}$-comultiplication module for each prime ideal
$\mathcal{P}\ $of $R.\ $

(iii)\ $M\ $is an $\mathcal{M}$-comultiplication module for each maximal ideal
$\mathcal{M}$ of $R.$

(iv)\ $M\ $is an $\mathcal{M}$-comultiplication module for each maximal ideal
$\mathcal{M}$ of $R\ $with $M_{\mathcal{M}}\neq0_{\mathcal{M}}.$
\end{theorem}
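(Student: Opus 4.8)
The plan is to establish the cyclic chain $(i)\Rightarrow(ii)\Rightarrow(iii)\Rightarrow(iv)\Rightarrow(i)$, where the first three implications are formal and essentially all the content sits in $(iv)\Rightarrow(i)$, to be handled by a local--global argument. For $(i)\Rightarrow(ii)$, if $M$ is a comultiplication module then by Example \ref{exco} it is an $S$-comultiplication module for \emph{every} m.c.s. $S$; applying this to $S_{\mathcal{P}}=R-\mathcal{P}$ shows $M$ is $\mathcal{P}$-comultiplication for each prime $\mathcal{P}$. The implication $(ii)\Rightarrow(iii)$ is immediate since every maximal ideal is prime, and $(iii)\Rightarrow(iv)$ is immediate since (iv) merely restricts the requirement to the subcollection of maximal ideals $\mathcal{M}$ with $M_{\mathcal{M}}\neq 0_{\mathcal{M}}$.

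For the key implication $(iv)\Rightarrow(i)$, I would fix an arbitrary submodule $N$ of $M$ and aim to show $N=(0:_{M}ann(N))$; by the characterization recalled in the Introduction (that $M$ is a comultiplication module if and only if $N=(0:_{M}ann(N))$ for every submodule $N$), this is exactly what is needed for (i). Since $ann(N)N=0$ we always have $N\subseteq(0:_{M}ann(N))$, so the task reduces to proving the reverse inclusion $(0:_{M}ann(N))\subseteq N$. I would verify this inclusion after localizing at each maximal ideal $\mathcal{M}$, and then invoke the standard local--global principle that an inclusion of submodules of $M$ holds globally as soon as it holds in $M_{\mathcal{M}}$ for every maximal ideal $\mathcal{M}$ (which in turn follows from the fact that a module is zero precisely when all its localizations at maximal ideals vanish).

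The localization step splits into two cases. If $M_{\mathcal{M}}=0_{\mathcal{M}}$, then every submodule of $M$ localizes to $0$ at $\mathcal{M}$, so the desired inclusion holds trivially there. If $M_{\mathcal{M}}\neq 0_{\mathcal{M}}$, then hypothesis (iv) gives that $M$ is $\mathcal{M}$-comultiplication, i.e. $S_{\mathcal{M}}$-comultiplication with $S_{\mathcal{M}}=R-\mathcal{M}$, so by Lemma \ref{lma}(ii) there exists $s\in R\setminus\mathcal{M}$ with $s(0:_{M}ann(N))\subseteq N$. Because $s/1$ is a unit in $R_{\mathcal{M}}$, multiplication by it is an automorphism of $M_{\mathcal{M}}$, and localizing the inclusion $s(0:_{M}ann(N))\subseteq N$ yields $(0:_{M}ann(N))_{\mathcal{M}}\subseteq N_{\mathcal{M}}$. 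Thus the required inclusion holds at every maximal ideal, and the local--global principle completes the proof of (i).

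I expect the only delicate point to be the bookkeeping around localization: one must confirm that localizing the submodules $N$ and $(0:_{M}ann(N))$ of $M$ commutes with inclusion, which is guaranteed by exactness of localization, and one must observe that it is precisely the element $s$ becoming a unit in $R_{\mathcal{M}}$ that upgrades the one-sided $S$-comultiplication sandwich into the genuine local equality needed for comultiplication. No substantial computation is required beyond this, and the restriction in (iv) to maximal ideals with $M_{\mathcal{M}}\neq 0_{\mathcal{M}}$ is exactly what allows the trivial case to absorb all the remaining maximal ideals.
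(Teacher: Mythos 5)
Your proposal is correct and takes essentially the same route as the paper's own proof: the same cyclic chain of implications, with $(iv)\Rightarrow(i)$ handled by the identical local--global argument in which the element $s\notin\mathcal{M}$ supplied by Lemma \ref{lma}(ii) becomes a unit in $R_{\mathcal{M}}$, forcing $(0:_{M}ann(N))_{\mathcal{M}}=N_{\mathcal{M}}$ at every maximal ideal. The only cosmetic difference is that the paper records the local equality directly, whereas you split off the trivial global inclusion $N\subseteq(0:_{M}ann(N))$ and verify only the reverse inclusion locally.
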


\begin{proof}
$(i)\Rightarrow(ii):\ $Follows from Example \ref{exco}.

$(ii)\Rightarrow(iii):\ $Follows from the fact that every maximal ideal is prime.

$(iii)\Rightarrow(iv):\ $Clear.

$(iv)\Rightarrow(i):\ $Suppose that $M\ $is an $\mathcal{M}$-comultiplication
module for each maximal ideal $\mathcal{M}$ of $R$ with $M_{\mathcal{M}}%
\neq0_{\mathcal{M}}.\ $Take a submodule $N\ $of $M$ and a maximal ideal
$\mathcal{M}$ of $R.\ $If $M_{\mathcal{M}}=0_{\mathcal{M}},$\ then clearly we
have $N_{\mathcal{M}}=(0:_{M}ann(N))_{\mathcal{M}}.\ $So assume that
$M_{\mathcal{M}}\neq0_{\mathcal{M}}.\ $Since $M\ $is an $\mathcal{M}%
$-comultiplication module, there exists $s_{\mathcal{M}}\notin\mathcal{M\ }%
$such that $s_{\mathcal{M}}(0:_{M}ann(N))\subseteq N.\ $Then we have
\[
(0:_{M}ann(N))_{\mathcal{M}}=\left(  s_{\mathcal{M}}(0:_{M}ann(N))\right)
_{\mathcal{M}}\subseteq N_{\mathcal{M}}\subseteq(0:_{M}ann(N))_{\mathcal{M}}.
\]
Thus we have $N_{\mathcal{M}}=(0:_{M}ann(N))_{\mathcal{M}}\ $for each maximal
ideal $\mathcal{M\ }$of $R.\ $Therefore, $N=(0:_{M}ann(N))$ so that $M\ $is a
comultiplication module.
\end{proof}

Now, we shall give the S-version of Dual Nakayama's Lemma for $S$%
-comultiplication module. First, we need the following Proposition.

\begin{proposition}
\label{pf}Let $M\ $be an $S$-comultiplication $R$-module. Then,

(i) If $I\ $is an ideal of $R\ $with $(0:_{M}I)=0,\ $then there exists $s\in
S\ $such that $sM\subseteq IM.$

(ii)\ If $I\ $is an ideal of $R\ $with $(0:_{M}I)=0,\ $then for every element
$m\in M,\ $there exists $s\in S\ $and $a\in I\ $such that $sm=am.\ $

(iii)\ If $M\ $is an $S$-finite $R$-module and $I\ $is an ideal of $R\ $with
$(0:_{M}I)=0,\ $then there exist $s\in S\ $and $a\in I\ $such that $(s+a)M=0.$
\end{proposition}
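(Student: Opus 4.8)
The plan is to lean on Lemma \ref{lma}(iii), which guarantees that whenever $ann(K)\subseteq ann(N)$ for submodules $K,N$ of $M$, some $s\in S$ satisfies $sN\subseteq K$. In each part the hypothesis $(0:_{M}I)=0$ is exactly what forces the required annihilator inclusion.

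For (i) I would take $N=M$ and $K=IM$ and verify that $ann(IM)\subseteq ann(M)$. Indeed, if $r(IM)=0$ then $I(rM)=0$, so $rM\subseteq(0:_{M}I)=0$, i.e. $r\in ann(M)$; since trivially $ann(M)\subseteq ann(IM)$ we even get equality. Lemma \ref{lma}(iii) then produces $s\in S$ with $sM\subseteq IM$. For (ii) I would run the same idea one element at a time: fix $m\in M$ and apply Lemma \ref{lma}(iii) with $N=Rm$ and $K=Im$. The inclusion $ann(Im)\subseteq ann(Rm)$ holds because $r\in ann(Im)$ gives $I(rm)=0$, whence $rm\in(0:_{M}I)=0$ and $r\in ann(m)=ann(Rm)$. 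Lemma \ref{lma}(iii) yields $s\in S$ with $sRm\subseteq Im$, and reading off the element $sm$ gives $sm=am$ for some $a\in I$.

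Part (iii) is where the real work lies, and I expect it to be the main obstacle: it is an $S$-version of the determinant (Nakayama / Cayley--Hamilton) trick, complicated by the fact that $M$ need only be $S$-finite, not finitely generated. I would fix a finitely generated submodule $K=Rm_{1}+\cdots+Rm_{k}$ and $s_{0}\in S$ with $s_{0}M\subseteq K\subseteq M$. By part (i) choose $s_{1}\in S$ with $s_{1}M\subseteq IM$; then $s_{0}s_{1}M\subseteq s_{0}(IM)=I(s_{0}M)\subseteq IK$, so writing $t=s_{0}s_{1}\in S$ one has $tm_{i}=\sum_{j}a_{ij}m_{j}$ with $a_{ij}\in I$. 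The matrix $A=(t\delta_{ij}-a_{ij})$ annihilates the column $(m_{1},\dots,m_{k})^{T}$, so multiplying by its adjugate gives $\det(A)\,m_{j}=0$ for all $j$, i.e. $\det(A)K=0$. Expanding $\det(A)=\det(tI_{k}-(a_{ij}))=t^{k}+c$, where $c\in I$ collects every term containing at least one $a_{ij}$.

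Finally $\det(A)K=0$ together with $s_{0}M\subseteq K$ gives $(t^{k}+c)s_{0}M=0$; setting $s=t^{k}s_{0}\in S$ and $a=cs_{0}\in I$ yields $(s+a)M=0$, as required. The two delicate points to get right are the passage $s_{0}s_{1}M\subseteq IK$ rather than merely $IM$, so that the coefficients $a_{ij}$ land inside a fixed finite generating set and the matrix argument applies, and the transport of the annihilator of $K$ back to all of $M$ through the factor $s_{0}$. Degenerate cases (for instance $M=0$ or $K=0$) are handled directly and do not affect the main argument.
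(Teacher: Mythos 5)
Your proof is correct and follows essentially the same route as the paper: parts (i) and (ii) reduce to Lemma \ref{lma}(iii) via the annihilator equalities $ann(IM)=ann(M)$ and $ann(Im)=ann(Rm)$, and part (iii) is the same determinant/adjugate argument on a finite generating set supplied by $S$-finiteness, combined with part (i) and a final multiplication by the $S$-finiteness scalar to pass from the generating set back to all of $M$. The only differences are notational (you bundle the two scalars into $t=s_{0}s_{1}$ before forming the matrix, while the paper keeps them separate), so nothing further is needed.
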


\begin{proof}
$(i):$\ Suppose that $I\ $is an ideal of $R\ $with $(0:_{M}I)=0.\ $Then we
have $((0:_{M}I):M)=(0:IM)=(0:M).\ $Then by Lemma \ref{lma} (iii), there
exists $s\in S\ $such that $sM\subseteq IM.\ $

$(ii):$ Suppose that $I\ $is an ideal of $R\ $with $(0:_{M}I)=0.\ $Then for
any $m\in M,\ $we have $(0:Rm)=((0:_{M}I):Rm)=(0:Im).\ $Again by Lemma
\ref{lma} (iii), there exists $s\in S\ $such that $sRm\subseteq Im\ $and so
$sm=am\ $for some $a\in I.$

$(iii):$\ Suppose that $M\ $is an $S$-finite $R$-module and $I\ $is an ideal
of $R\ $with $(0:_{M}I)=0.\ $Then there exists $t\in S\ $such that
$tM\subseteq Rm_{1}+Rm_{2}+\cdots+Rm_{n}\ $for some $m_{1},m_{2},\ldots
,m_{n}\in M.\ $Since $(0:_{M}I)=0,\ $by (i), there exists $s\in S\ $such that
$sM\subseteq IM.$ This implies that $stM\subseteq tIM=ItM\subseteq
I(Rm_{1}+Rm_{2}+\cdots+Rm_{n})=Im_{1}+Im_{2}+\cdots+Im_{n}.\ $Then for each
$i=1,2,\ldots,n,\ $we have $stm_{i}=a_{i1}m_{1}+a_{i2}m_{2}+\cdots+a_{in}%
m_{n}\ $and so $-a_{i1}m_{1}-a_{i2}m_{2}-\cdots+(st-a_{ii})m_{i}+\cdots
-a_{in}m_{n}=0.\ $Now, let $\Delta\ $be the following matrix
\[
\left[
\begin{array}
[c]{cccc}%
st-a_{11} & -a_{12} & \cdots & -a_{1n}\\
-a_{21} & st-a_{22} & \cdots & -a_{2n}\\
\vdots & \vdots & \ddots & \vdots\\
-a_{n1} & -a_{n2} & \cdots & st-a_{nn}%
\end{array}
\right]  _{n\times n}.
\]
Then we have $\left\vert \Delta\right\vert m_{i}=0\ $for each $i=1,2,\ldots
,n.\ $Thus we obtain that $t\left\vert \Delta\right\vert M=0.\ $This implies
that $t(s^{n}t^{n}+a)M=(s^{n}t^{n+1}+at)M=0\ $for some $a\in I.\ $Now, put
$u=s^{n}t^{n+1}\in S$ and $b=at\in I.\ $Then we have $(u+b)M=0\ $which
completes the proof.
\end{proof}

\begin{theorem}
\label{tdu}\textbf{(S-Dual Nakayama's Lemma)} Let $M$ be an $S$%
-comultiplication module, where $S\ $is a m.c.s of $R\ $satisfying maximal
multiple condition. Suppose that $I\ $is an ideal of $R\ $such that
$tI\subseteq Jac(R)\ $for some $t\in S.\ $If $(0:_{M}tI)=0,\ $then there
exists $s\in S\ $such that $sM=0.$
\end{theorem}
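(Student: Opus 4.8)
The plan is to imitate the classical proof of Nakayama's lemma that runs through the unit trick ``$1-a$ is invertible for $a\in Jac(R)$'', using Proposition \ref{pf} as the $S$-analogue of the identity $M=IM$. First I would record the transparent model case $S\subseteq u(R)$, which already exhibits the mechanism. Put $J=tI$; since $tI\subseteq I$, the hypothesis $(0:_{M}tI)=0$ gives both $(0:_{M}J)=0$ and $J\subseteq Jac(R)$. Applying Proposition \ref{pf}(ii) to the ideal $J$, for each $m\in M$ there are $s\in S$ and $a\in J$ with $sm=am$. When $s$ is a unit this reads $(1-s^{-1}a)m=0$ with $s^{-1}a\in J\subseteq Jac(R)$, so $1-s^{-1}a$ is a unit and $m=0$; hence $M=0$, which is exactly ``$sM=0$'' with $s=1$.

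For a general m.c.s.\ $S$ satisfying the maximal multiple condition I would first reduce the target. Fix $s^{\star}\in S$ with $t\mid s^{\star}$ for every $t\in S$. Then ``$sM=0$ for some $s\in S$'' is equivalent to $S^{-1}M=0$: if $S^{-1}M=0$ then every $m$ is killed by some $s_{m}\in S$, and since $s_{m}\mid s^{\star}$ we get $s^{\star}m=0$ for all $m$, i.e.\ $s^{\star}M=0$. So it suffices to prove $S^{-1}M=0$. By Theorem \ref{tloc}, $S^{-1}M$ is a comultiplication $S^{-1}R$-module, i.e.\ an honest (trivially $\{1\}$-)comultiplication module, which is precisely the situation handled by the model case.

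Next I would transfer the two hypotheses across the localization. The condition $(0:_{M}tI)=0$ passes to $(0:_{S^{-1}M}S^{-1}(tI))=0$: if $S^{-1}(tI)$ kills $m/s^{\prime}$, then each $a\in tI$ satisfies $u_{a}am=0$ for some $u_{a}\in S$, and the maximal multiple condition lets me replace every $u_{a}$ by the single $s^{\star}$, giving $(tI)(s^{\star}m)=0$, so $s^{\star}m\in(0:_{M}tI)=0$ and $m/s^{\prime}=0$. Once I also have $S^{-1}(tI)\subseteq Jac(S^{-1}R)$, Proposition \ref{pf}(ii) applied to the comultiplication module $S^{-1}M$ over $S^{-1}R$ (where now every $s$ is a unit) forces $(1-\bar{a})\xi=0$ with $\bar{a}\in Jac(S^{-1}R)$ for every $\xi\in S^{-1}M$, whence $\xi=0$ and $S^{-1}M=0$, as required.

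The main obstacle is exactly the non-invertibility of the element $s\in S$ produced by Proposition \ref{pf}: in $R$ the equation $(s-a)m=0$ cannot be closed by the unit trick, because $s-a\equiv s\pmod{Jac(R)}$ need not be a unit, and this is the whole reason for passing to $S^{-1}R$, where $s$ becomes invertible. The one step I expect to be genuinely delicate is the containment $S^{-1}(tI)\subseteq Jac(S^{-1}R)$; this is where the hypothesis $tI\subseteq Jac(R)$ must be combined with the maximal multiple condition, which pins $S^{-1}R$ down as the localization $R_{s^{\star}}$ and should keep the image of $tI$ inside every maximal ideal of $S^{-1}R$. I anticipate that verifying this containment is the technical heart of the proof, the remaining manipulations being the routine localization bookkeeping sketched above.
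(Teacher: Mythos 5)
Your overall strategy (reduce to $S^{-1}M=0$, use Theorem~\ref{tloc} to get a genuine comultiplication module over $S^{-1}R$, then run classical Dual Nakayama there) is coherent and genuinely different from the paper's, and the localization bookkeeping you sketch (the equivalence of $sM=0$ with $S^{-1}M=0$ under the maximal multiple condition, and the transfer of $(0:_{M}tI)=0$ to $(0:_{S^{-1}M}S^{-1}(tI))=0$) is correct. But the proof has a genuine gap exactly where you flag it: the containment $S^{-1}(tI)\subseteq Jac(S^{-1}R)$ is asserted, never proved, and it is not ``routine.'' For a general multiplicatively closed set this containment is simply false: take $R$ a local domain with maximal ideal $\mathfrak{m}$ and invert a nonzero element of $\mathfrak{m}$; then $Jac(R)$ generates the unit ideal of the localization. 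What rescues the claim here is a structural consequence of the maximal multiple condition that your sketch never touches: if $s^{\star}\in S$ is divisible by every element of $S$, then $(s^{\star})^{2}\in S$ divides $s^{\star}$, so $s^{\star}=(s^{\star})^{2}y$ for some $y\in R$; hence $e=s^{\star}y$ is idempotent, $\ker(R\rightarrow S^{-1}R)=(1-e)R$, and $S^{-1}R\cong eR$ is a direct ring factor of $R$. Then $Jac(S^{-1}R)\cong e\,Jac(R)$ and the image of $tI$ lands inside it. (Equivalently: one can show every maximal ideal of $S^{-1}R$ contracts to a maximal ideal of $R$ disjoint from $S$, again using the idempotent $e$.) Until an argument of this kind is supplied, the ``technical heart'' of your proof is missing, and the theorem is not proved.

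It is also worth noting that the obstacle you cite as the reason for localizing is illusory, which is why the paper's proof is much shorter and stays entirely inside $R$. The paper applies Proposition~\ref{pf}(ii) to the ideal $tI$ to get, for each $m\in M$, some $t^{\prime}\in S$ with $t^{\prime}Rm\subseteq tIm$, and then uses the maximal multiple condition to replace the element-dependent $t^{\prime}$ by the fixed maximal element $s$: since $s^{2}t^{\prime}\in S$ divides $s$, one gets $sm=s^{2}am$ for some $a\in I$, i.e.\ $(1-sa)(sm)=0$ with $sa\in sI\subseteq tI\subseteq Jac(R)$. The unit trick then kills $sm$, not $m$ --- and since the target is $sM=0$ rather than $M=0$, the non-invertibility of $s$ never matters. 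So the direct route closes in $R$ precisely by folding the stray factor of $s$ into the Jacobson radical element, whereas your route defers all the difficulty into the unproven statement about $Jac(S^{-1}R)$.
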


\begin{proof}
Suppose that $S\ $satisfies maximal multiple condition. Then there exists
$s\in S\ $such that $t|s$ for each $t\in S.\ $Let $I\ $be an ideal of
$R\ $with $tI\subseteq Jac(R)\ $for some $t\in S\ $and $(0:_{M}tI)=0.\ $Then
for each $m\in M,\ $by Proposition \ref{pf} (ii), there exists $t^{\prime}\in
S\ $such that $t^{\prime}Rm\subseteq tIm\ $and so $s^{2}t^{\prime}Rm\subseteq
s^{2}tIm\subseteq s^{2}Im.\ $Now, put $u=s^{2}t^{\prime}.\ $By maximal
multiple condition, we have $sRm\subseteq uRm\subseteq s^{2}Im\ $and so
$sm=s^{2}am\ $for some $a\in R.\ $On the other hand, we note that $sI\subseteq
tI\subseteq Jac(R).\ $Thus we have\ $s(1-sa)m=0.\ $Since $sa\in Jac(R),\ $we
get $1-sa$ is unit and so $sm=0.\ $Thus we have $sM=0.\ $
\end{proof}

\begin{corollary}
\textbf{(Dual Nakayama's Lemma)} Let $M$ be a comultiplication module and
$I\ $an ideal of $R\ $such that $I\subseteq Jac(R).\ $If $(0:_{M}I)=0,\ $then
$M=0.\ $
\end{corollary}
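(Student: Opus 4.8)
The plan is to deduce this classical statement as the special case $S=\{1\}$ of the $S$-Dual Nakayama's Lemma (Theorem \ref{tdu}). First I would set $S=\{1\}$ and confirm that this is a legitimate m.c.s of $R$: indeed $0\notin\{1\}$, $1\in\{1\}$, and $1\cdot1=1\in\{1\}$, so all three defining conditions hold.

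Next I would verify the two structural hypotheses required by Theorem \ref{tdu}. Since $S=\{1\}\subseteq u(R)$, Example \ref{exco} shows that the comultiplication module $M$ is an $S$-comultiplication module; in fact for this choice of $S$ the two notions coincide. Moreover $S=\{1\}$ trivially satisfies the maximal multiple condition, because its unique element $1$ is divided by every element of $S$ (namely by $1$ itself). Thus both standing hypotheses of Theorem \ref{tdu} are met by $M$ and $S$.

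Finally I would apply Theorem \ref{tdu} with the choice $t=1\in S$. Under this choice the assumption $I\subseteq Jac(R)$ becomes precisely $tI=I\subseteq Jac(R)$, and the assumption $(0:_{M}I)=0$ becomes $(0:_{M}tI)=0$; hence the hypotheses of the theorem are satisfied verbatim. Theorem \ref{tdu} then produces an element $s\in S=\{1\}$ with $sM=0$. Since necessarily $s=1$, this equality reads $M=1\cdot M=0$, which is exactly the desired conclusion.

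I do not anticipate any genuine obstacle here, as the corollary is a pure specialization of the $S$-version. The only points deserving a word of justification are the (trivial) observation that $\{1\}$ satisfies the maximal multiple condition and the identification, via Example \ref{exco}, of comultiplication modules with $\{1\}$-comultiplication modules; once these are recorded the result follows at once.
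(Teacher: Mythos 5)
Your proposal is correct and follows exactly the paper's own route: the paper's proof is the one-line instruction ``Take $S=\{1\}$ and apply Theorem \ref{tdu}.'' You have merely spelled out the routine verifications (that $\{1\}$ is a m.c.s.\ satisfying the maximal multiple condition, and that comultiplication modules are $\{1\}$-comultiplication modules via Example \ref{exco}) that the paper leaves implicit.
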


\begin{proof}
Take $S=\{1\}\ $and apply Theorem \ref{tdu}.
\end{proof}

\section{S-cyclic modules}

In this section, we investigate the relations between $S$-comultiplication
modules and $S$-cyclic modules.

\begin{proposition}
\label{pcy1}Let $M$ be an $S$-comultiplication $R$-module and $N$ be a minimal
ideal of $R$ such that $(0:_{M}N)=0$. Then, $M$ is an $S$-cyclic module.
\end{proposition}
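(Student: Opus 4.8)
The plan is to reduce the whole statement to a single annihilator equality and then quote Lemma \ref{lma}(iii). Recall that $M$ is $S$-cyclic means there exist $s\in S$ and $m_0\in M$ with $sM\subseteq Rm_0$. Applying Lemma \ref{lma}(iii) to the submodules $Rm_0$ and $M$, such an $s$ exists as soon as $ann(Rm_0)\subseteq ann(M)$; since $ann(M)\subseteq ann(m_0)$ is automatic, the entire task is to produce a single element $m_0\in M$ with $ann(m_0)=ann(M)$. Thus I would spend the proof understanding the annihilators forced by the minimal ideal $N$.

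First I would record that $N^2\neq 0$. Since $M\neq 0$ and $(0:_{M}N)=0$, we have $NM\neq 0$ (otherwise $M\subseteq(0:_{M}N)=0$); and if $N^2=0$ then $N(NM)=N^2M=0$ would force $NM\subseteq(0:_{M}N)=0$, a contradiction. A minimal ideal with $N^2\neq 0$ is generated by an idempotent: picking $a\in N$ with $Na\neq 0$, minimality gives $Na=N$, so $ea=a$ for some $e\in N$; the ideal $\{r\in N:ra=0\}\subsetneq N$ is then $0$ by minimality, and since $(e^{2}-e)a=0$ this yields $e^{2}=e\neq 0$ and $N=Re$. Extracting this idempotent generator is the one genuinely nonroutine step; everything afterward is a short computation.

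The idempotent does the decisive work. From $(0:_{M}N)=(0:_{M}e)=0$ together with $e(m-em)=(e-e^{2})m=0$ I conclude $m=em$ for every $m\in M$, i.e. $e$ acts as the identity on $M$. Put $\mathfrak{m}=ann(N)=ann(e)$, which is a maximal ideal because $N\cong R/ann(e)$ is a simple module. Then $\mathfrak{m}M=0$, since for $a\in\mathfrak{m}$ and $m\in M$ we get $am=a(em)=(ae)m=0$. Hence $\mathfrak{m}\subseteq ann(m_0)$ for every $m_0\in M$, while $ann(m_0)\neq R$ whenever $m_0\neq 0$; maximality of $\mathfrak{m}$ then forces $ann(m_0)=\mathfrak{m}=ann(M)$.

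Finally, choosing any nonzero $m_0\in M$ gives $ann(Rm_0)=ann(m_0)=ann(M)$, so Lemma \ref{lma}(iii) supplies $s\in S$ with $sM\subseteq Rm_0$, proving that $M$ is $S$-cyclic. I expect the main obstacle to be precisely the reduction of the minimal ideal to an idempotent generator (Brauer's lemma), and I would be careful to verify that the degenerate cases $N^{2}=0$ and $M=0$ are genuinely ruled out by the hypotheses, since the identity-action argument collapses without $N^{2}\neq 0$.
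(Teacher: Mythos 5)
Your proof is correct, but it follows a genuinely different route from the paper's. The paper's argument is purely formal: pick a nonzero $m\in M$, apply the $S$-comultiplication property to $Rm$ to get $s(0:_{M}I)\subseteq Rm\subseteq(0:_{M}I)$, and use the hypothesis $(0:_{M}N)=0$ to rewrite $(0:_{M}I)=((0:_{M}N):_{M}I)=(0:_{M}NI)$; since $NI\subseteq N$, minimality gives $NI=N$ or $NI=0$, and the first case forces $Rm\subseteq(0:_{M}N)=0$, a contradiction, while the second gives $sM=s(0:_{M}0)\subseteq Rm$ at once. So the paper uses minimality exactly once, at the level of the ideal $NI$, and never needs the internal structure of $N$. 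You instead prove Brauer's lemma (first $N^{2}\neq0$, then $N=Re$ with $e^{2}=e\neq0$), show that $e$ acts as the identity on $M$, that $\mathfrak{m}=ann(N)=ann(e)$ is a maximal ideal with $\mathfrak{m}M=0$, and conclude that $ann(m_{0})=ann(M)=\mathfrak{m}$ for every nonzero $m_{0}$, after which Lemma \ref{lma}(iii) finishes in one stroke. Your route costs more --- the idempotent extraction is the standard but nonroutine step you yourself flagged --- yet it yields strictly more structure: $M$ is exhibited as a vector space over the field $R/\mathfrak{m}$, every nonzero element is a generator up to $S$, and the $S$-comultiplication hypothesis is quarantined to a single invocation of Lemma \ref{lma}(iii). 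What the paper's approach buys is economy: a few lines of colon arithmetic, no case analysis on $N^{2}$, and no appeal to idempotents or maximal ideals.
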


\begin{proof}
Choose a nonzero element $m$ of $M$. Since $M$ is\ an $S$-comultiplication
module, there exist $s\in S$ and an ideal $I$ of $R$ such that $s(0:_{M}%
I)\subseteq Rm\subseteq(0:_{M}I)$. By the assumption $(0:_{M}N)=0$, we have
\[
s((0:_{M}N):_{M}I)\subseteq Rm\subseteq((0:_{M}N):_{M}I)\Longrightarrow
s(0:_{M}NI)\subseteq Rm\subseteq(0:_{M}NI).
\]
Since $0\subseteq NI\subseteq N$ and $N$ is minimal ideal of $R$, either
$NI=N$ or $NI=0$. If the former case holds, we have $s(0:_{M}N)\subseteq
Rm\subseteq(0:_{M}N)$. This means that$\ Rm=0$, a contradiction. The second
case implies the equality $s(0:_{M}0)\subseteq Rm\subseteq(0:_{M}0)$. It means
$sM\subseteq Rm\subseteq M$ proving that $M$ is $S$-cyclic.
\end{proof}

\begin{proposition}
Let $M$ be an $S$-comultiplication module of $R$. Let $\{M_{i}\}$ be a
collection of submodules of $M$ with $\bigcap_{i}M_{i}=0$. Then, for every
submodule $N$ of $M$, there exists an $s\in S$ such that
\[
s\bigcap_{i}(N+M_{i})\subseteq N\subseteq\bigcap_{i}(N+M_{i}).
\]

\end{proposition}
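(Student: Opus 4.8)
The plan is to reduce the statement to the internal characterization of $S$-comultiplication modules recorded in Lemma \ref{lma} (iii): whenever $ann(K)\subseteq ann(L)$ for submodules $K,L$ of $M$, there is some $s\in S$ with $sL\subseteq K$. I would take $K=N$ and $L=\bigcap_{i}(N+M_{i})$, so that producing the single scalar $s$ demanded by the conclusion reduces entirely to verifying the annihilator inclusion $ann(N)\subseteq ann\!\left(\bigcap_{i}(N+M_{i})\right)$. The right-hand containment in the statement is immediate and handled first: since $N\subseteq N+M_{i}$ for every index $i$, we get $N\subseteq\bigcap_{i}(N+M_{i})$ with no hypotheses at all.

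The heart of the argument is the annihilator inclusion, and this is precisely where the assumption $\bigcap_{i}M_{i}=0$ is used. I would take $a\in ann(N)$ and an arbitrary $x\in\bigcap_{i}(N+M_{i})$. For each $i$ one may write $x=n_{i}+m_{i}$ with $n_{i}\in N$ and $m_{i}\in M_{i}$; then $ax=an_{i}+am_{i}=am_{i}$, since $aN=0$. Because $M_{i}$ is a submodule, $am_{i}\in M_{i}$, so $ax\in M_{i}$ for every $i$, and therefore $ax\in\bigcap_{i}M_{i}=0$. As $x$ was arbitrary this gives $a\in ann\!\left(\bigcap_{i}(N+M_{i})\right)$, establishing $ann(N)\subseteq ann\!\left(\bigcap_{i}(N+M_{i})\right)$.

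With this inclusion in hand, Lemma \ref{lma} (iii) applied to $K=N$ and $L=\bigcap_{i}(N+M_{i})$ directly yields an $s\in S$ with $s\bigcap_{i}(N+M_{i})\subseteq N$, which together with the trivial right-hand containment gives the claim. I do not expect a serious obstacle: the one genuinely substantive observation is that the $N$-component of each decomposition of $x$ is annihilated while the cross term $am_{i}$ remains inside $M_{i}$, so that intersecting over $i$ annihilates $x$; everything else is a formal appeal to the characterization lemma.
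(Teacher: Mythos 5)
Your proof is correct, and it is a clean reformulation of the paper's argument routed through a different part of the key lemma. The paper invokes Lemma \ref{lma}(ii): it sandwiches $N$ as $s(0:_{M}ann(N))\subseteq N\subseteq(0:_{M}ann(N))$, rewrites $(0:_{M}ann(N))=(\bigcap_{i}M_{i}:_{M}ann(N))=\bigcap_{i}(M_{i}:_{M}ann(N))$ using the hypothesis $\bigcap_{i}M_{i}=0$, and then uses (implicitly, without justification on the page) that $N+M_{i}\subseteq(M_{i}:_{M}ann(N))$, so that $s\bigcap_{i}(N+M_{i})\subseteq s\bigcap_{i}(M_{i}:_{M}ann(N))\subseteq N$. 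You instead verify the annihilator inclusion $ann(N)\subseteq ann\bigl(\bigcap_{i}(N+M_{i})\bigr)$ by the elementwise decomposition $x=n_{i}+m_{i}$, $ax=am_{i}\in M_{i}$, and then invoke Lemma \ref{lma}(iii). The mathematical core is identical in the two proofs: everything rests on the fact that $ann(N)\cdot(N+M_{i})\subseteq M_{i}$ for each $i$, so that $ann(N)$ kills $\bigcap_{i}(N+M_{i})$ once $\bigcap_{i}M_{i}=0$; your computation is exactly this fact made explicit, while in the paper it is the unstated step behind the containment $\bigcap_{i}(N+M_{i})\subseteq\bigcap_{i}(M_{i}:_{M}ann(N))$. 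What your route buys is transparency: the role of the hypothesis $\bigcap_{i}M_{i}=0$ is isolated in a single verifiable inclusion, and no colon-module identities are needed; the paper's route is more compressed but leaves its crucial containment to the reader.
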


\begin{proof}
Let $N$ be a submodule of $M$. Since $M$ is an $S$-comultiplication module, we
have $s(0:_{M}ann(N))\subseteq N\subseteq(0:_{M}ann(N))$ for some $s\in S$.
This implies $s(\bigcap_{i}M_{i}:_{M}ann(N))\subseteq N\subseteq(\bigcap
_{i}M_{i}:_{M}ann(N))$ since $\bigcap_{i}M_{i}=0$. Then, we obtain
$s\bigcap_{i}(M_{i}:_{M}ann(N))\subseteq N\subseteq\bigcap_{i}(M_{i}%
:_{M}ann(N))$. Thus,
\[
s\bigcap_{i}(N+M_{i})\subseteq s\bigcap_{i}(M_{i}:_{M}ann(N))\subseteq
N\subseteq\bigcap_{i}(N+M_{i}).
\]

\end{proof}

\begin{proposition}
Let $M$ be an $S$-comultiplication module. Then, for each submodule $N$ of $M$
and each ideal $I$ of $R$ with $N\subseteq s(0:_{M}I)$ for some $s\in S$,
there exists an ideal $J$ of $R$ such that $I\subseteq J$ and $s(0:_{M}%
J)\subseteq N$.
\end{proposition}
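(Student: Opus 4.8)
The plan is to use the canonical candidate $J=ann(N)$, the only ideal that naturally dualizes the situation, and to verify the two asserted containments separately. Both should drop out almost immediately: the inclusion $I\subseteq J$ from the hypothesis $N\subseteq s(0:_{M}I)$, and the inclusion $s(0:_{M}J)\subseteq N$ from the $S$-comultiplication property itself as already repackaged in Lemma \ref{lma}.

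First I would establish $I\subseteq J=ann(N)$. By hypothesis every $n\in N$ has the form $n=sm^{\prime}$ with $m^{\prime}\in(0:_{M}I)$, i.e. $Im^{\prime}=0$; hence for any $a\in I$ we get $an=s(am^{\prime})=0$. Thus $IN=0$, which is exactly $I\subseteq ann(N)=J$. This step is routine and uses only the hypothesis together with pulling $s$ out of $am^{\prime}$. Next I would prove the reverse inclusion $s(0:_{M}J)\subseteq N$, that is $s(0:_{M}ann(N))\subseteq N$. This is precisely the left-hand containment guaranteed by Lemma \ref{lma}(ii) applied to the submodule $N$: since $M$ is an $S$-comultiplication module, there is an element of $S$ with $s(0:_{M}ann(N))\subseteq N\subseteq(0:_{M}ann(N))$. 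Equivalently, because $ann(N)=ann\big((0:_{M}ann(N))\big)$, one may feed the pair $\big(N,(0:_{M}ann(N))\big)$ into Lemma \ref{lma}(iii) to obtain the same inclusion. Combining the two steps yields an ideal $J=ann(N)$ with $I\subseteq J$ and $s(0:_{M}J)\subseteq N$.

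The delicate point, and really the only thing to watch, is the bookkeeping of the multiplier $s$: the hypothesis fixes one witness for $N\subseteq s(0:_{M}I)$, whereas Lemma \ref{lma} supplies its own witness, say $s_{1}\in S$, for $s_{1}(0:_{M}ann(N))\subseteq N$, and a priori these need not be the same element. I would resolve this by reading the conclusion's $s$ as the witness attached to the ideal $ann(N)$, which is consistent with the statement asserting only the existence of a suitable pair $(J,s)$. If instead one wishes to retain a prescribed witness, note that $N$ is a submodule, so $s_{1}N\subseteq N$ and hence the common multiple $s_{1}s\in S$ satisfies $(s_{1}s)(0:_{M}ann(N))=s\big(s_{1}(0:_{M}ann(N))\big)\subseteq sN\subseteq N$, so the same ideal $J=ann(N)$ still works with a convenient witness. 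In every reading the candidate $J=ann(N)$ does the job, and the genuine content is carried entirely by the $S$-comultiplication property recorded in Lemma \ref{lma}.
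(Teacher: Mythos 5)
Your proof is correct and is essentially the paper's own argument: the paper chooses $J=I+ann(N)$ so that $I\subseteq J$ is automatic and then invokes the same key containment $s_{1}(0:_{M}ann(N))\subseteq N$ coming from Lemma \ref{lma}, but since your first step shows the hypothesis $N\subseteq s(0:_{M}I)$ forces $I\subseteq ann(N)$, your choice $J=ann(N)$ is literally the same ideal as the paper's. Your caution about the multiplier is also warranted---the paper's proof silently reuses the letter $s$ for the witness supplied by the $S$-comultiplication property, which is exactly the conflation you make explicit and repair with the product $s_{1}s$.
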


\begin{proof}
Let $N$ be a submodule of $M$. Since $M$ is an $S$-comultiplication module,
$s(0:_{M}ann(N))\subseteq N\subseteq(0:_{M}ann(N))$ for some $s\in S$. So, we
obtain $s(0:_{M}ann(N))\subseteq N\subseteq s(0:_{M}I)$. Taking $J=I+ann(N)$,
\[
s(0:_{M}J)=s(0:_{M}I+ann(N))\subseteq s(0:_{M}I)\cap s(0:_{M}ann(N))\subseteq
s(0:_{M}ann(N))\subseteq N.
\]

\end{proof}

Recall that an $R$-module $M\ $is said to be a \textit{torsion free} if the
set of torsion elements $T(M)=\{m\in M:rm=0\ $for some $0\neq r\in R\}$ of
$M\ $is zero. Also $M$ is called a torsion module if $T(M)=M.\ $We refer the
reader to \cite{AnChun} for more details on torsion subsets $T(M)\ $of $M.\ $

\begin{theorem}
\label{torsion} Every $S$-comultiplication module is either $S$-cyclic or torsion.
\end{theorem}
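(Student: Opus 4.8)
The plan is to read the statement as a dichotomy and prove its contrapositive form: assuming $M$ is not torsion, I would produce the $S$-cyclic structure directly. First I would unpack what failure of the torsion condition yields. Since $M$ is not torsion, $T(M)\neq M$, so there exists an element $m\in M$ that is not a torsion element. By the definition of $T(M)$, this means $m\neq 0$ and $rm\neq 0$ for every nonzero $r\in R$; equivalently, $ann(m)=0$. The key elementary observation I would record next is that the annihilator of the cyclic submodule $Rm$ coincides with $ann(m)$, because $r\cdot Rm=0$ if and only if $rm=0$. Hence $ann(Rm)=0$.

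With this in hand, the comultiplication hypothesis does almost all the remaining work. I would apply the characterization in Lemma \ref{lma}(ii) to the submodule $N=Rm$, obtaining an $s\in S$ with
\[
s(0:_{M}ann(Rm))\subseteq Rm\subseteq(0:_{M}ann(Rm)).
\]
The decisive simplification is that $ann(Rm)=0$, so the residual collapses: $(0:_{M}ann(Rm))=(0:_{M}0)=M$. Substituting this into the displayed inclusions gives $sM\subseteq Rm\subseteq M$, and in particular $sM\subseteq Rm$ for the chosen $s\in S$ and $m\in M$. By the definition of an $S$-cyclic module recalled in the introduction, this is exactly the statement that $M$ is $S$-cyclic.

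I do not anticipate a genuine obstacle here; the argument is short once the right reformulation is chosen. The only point requiring care is the conceptual one of translating ``not torsion'' into the existence of an element with trivial annihilator, and then recognizing that a cyclic submodule with zero annihilator forces the second residual in Lemma \ref{lma}(ii) to be the whole module $M$. That single collapse $(0:_{M}0)=M$ is what turns the comultiplication inclusion into the $S$-cyclic inclusion, so I would make sure to state it explicitly rather than leave it implicit.
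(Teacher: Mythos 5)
Your proof is correct and matches the paper's argument in all essentials: both apply the $S$-comultiplication property to the cyclic submodule $Rm$ of a non-torsion element $m$, use $ann(Rm)=ann(m)=0$ to collapse $(0:_{M}ann(Rm))$ to $M$, and conclude $sM\subseteq Rm$. The only difference is presentational---you argue the contrapositive directly while the paper frames it as a contradiction---so there is nothing substantive to change.
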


\begin{proof}
Let $M$ be an $S$-comultiplication module. Assume that $M$ is not an
$S$-cyclic module and $ann_{R}(m)=0$ for some $m\in M$. Since $Rm$ is a
submodule of $M$ and $M$ is an $S$-comultiplication module, we have
$s(0:_{M}ann(m))\subseteq Rm\subseteq(0:_{M}ann(m))$. It gives $sM\subseteq
Rm\subseteq M$ for some $s\in S$. This contradiction completes the proof.
Hence, $ann(m)\neq0$ for all $m\in M$ proving that $M$ is torsion module.
\end{proof}

\begin{theorem}
\label{tcy2}Let $R$ be an integral domain and $M$ be an $S$-finite and
$S$-comultiplication module. If $sM\ $is faithful for each $s\in S,\ $then $M$
is an $S$-cyclic module.
\end{theorem}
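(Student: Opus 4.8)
The plan is to argue by contradiction, using Theorem \ref{torsion} to split into the two possible cases and then exploiting the integral domain hypothesis together with $S$-finiteness to eliminate the torsion case.

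First I would suppose, for contradiction, that $M$ is \emph{not} an $S$-cyclic module. Since $M$ is an $S$-comultiplication module, Theorem \ref{torsion} (``every $S$-comultiplication module is either $S$-cyclic or torsion'') forces $M$ to be a torsion module, so $T(M)=M$. In particular, every element of $M$ admits a nonzero annihilator in $R$.

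Next I would invoke the $S$-finiteness of $M$: there exist $t\in S$ and finitely many elements $m_{1},m_{2},\ldots,m_{n}\in M$ with $tM\subseteq Rm_{1}+Rm_{2}+\cdots+Rm_{n}$. For each $i$, the torsion property supplies a nonzero $r_{i}\in R$ with $r_{i}m_{i}=0$. Putting $r=r_{1}r_{2}\cdots r_{n}$ and using that $R$ is an integral domain — so a product of nonzero elements is again nonzero — I obtain $r\neq0$, while simultaneously $r m_{i}=0$ for every $i$. Hence $r(Rm_{1}+\cdots+Rm_{n})=0$, and therefore $rtM\subseteq r(Rm_{1}+\cdots+Rm_{n})=0$.

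Finally I would read the relation $rtM=0$ as $r\in ann(tM)$. Since $t\in S$, the hypothesis guarantees that $tM$ is faithful, i.e.\ $ann(tM)=0$, which forces $r=0$ and contradicts $r\neq0$. This contradiction shows that $M$ cannot be torsion, so $M$ must be $S$-cyclic, as claimed. The only conceptual step is the reduction through Theorem \ref{torsion}; the remaining work — collapsing the finitely many generator-annihilators into a single nonzero annihilator — is exactly where the integral domain hypothesis (closure of nonzero elements under multiplication) and the $S$-finiteness (finiteness of the generating set) are used in tandem, and I expect no genuine obstacle there.
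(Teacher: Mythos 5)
Your proposal is correct and follows essentially the same route as the paper: contradiction via Theorem \ref{torsion}, then $S$-finiteness plus the domain hypothesis to contradict faithfulness of $sM$. The only cosmetic difference is that the paper phrases the last step as $\bigcap_{i}ann(m_{i})\subseteq ann(sM)=0$ forcing some $ann(m_{i})=0$ (contradicting torsion), whereas you take nonzero $r_{i}\in ann(m_{i})$ and note their product is a nonzero element of $ann(tM)$ (contradicting faithfulness) — the same argument read in the contrapositive direction.
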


\begin{proof}
Suppose that $M$ is not an $S$-cyclic module. Then $M$ is a torsion module
from Theorem \ref{torsion}. Since $M$ is an $S$-finite module, there exist
$s\in S\ $and $m_{1},m_{2},\ldots,m_{n}\in M\ $such that $sM\subseteq
Rm_{1}+Rm_{2}+\cdots+Rm_{n}.$ This implies that $ann(Rm_{1}+Rm_{2}%
+\cdots+Rm_{n})=\bigcap\limits_{i=1}^{n}ann(m_{i})\subseteq ann(sM)=0\ $since
$sM\ $is faithful. Since $R\ $is an integral domain, there exists $m_{i}\in M$
such that $ann(m_{i})=0$ which is a contradiction. Hence, $M$ is an $S$-cyclic module.
\end{proof}

Recall from \cite{SenArTeKo} that an $R$-module $M\ $is said to be an
$S$-torsion free module if there exists $s\in S\ $and whenever $am=0\ $for
some $a\in R\ $and $m\in M,\ $then either $sa=0\ $or $sm=0.\ $

\begin{theorem}
\label{tcy3}Every $S$-comultiplication $S$-torsion free module is an
$S$-cyclic module.
\end{theorem}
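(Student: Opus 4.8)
The plan is to work directly from the definition of $S$-comultiplication module via Lemma \ref{lma}, using the single element $s\in S$ furnished by the $S$-torsion freeness of $M$; one could first invoke Theorem \ref{torsion} to reduce to the torsion case, but this turns out to be unnecessary. First I would fix $s\in S$ such that $am=0$ (for $a\in R$, $m\in M$) always forces $sa=0$ or $sm=0$, and then split into two cases according to whether $sM=0$.

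If $sM=0$, then choosing any $m_{0}\in M$ we have $sM=0\subseteq Rm_{0}$, so $M$ is immediately $S$-cyclic and there is nothing further to prove.

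The substantive case is $sM\neq 0$. Here I would pick $m\in M$ with $sm\neq 0$. For such an $m$, every $a\in ann(m)$ satisfies $am=0$, so $S$-torsion freeness forces $sa=0$ or $sm=0$; as $sm\neq 0$, the second alternative fails and hence $sa=0$. Consequently, for any $a\in ann(m)$ and any $m'\in M$ we get $a(sm')=(sa)m'=0$, so that $ann(m)\cdot sM=0$, that is $sM\subseteq(0:_{M}ann(m))$. On the other hand, applying Lemma \ref{lma}(ii) to the cyclic submodule $Rm$ (and using $ann(Rm)=ann(m)$) yields some $s_{1}\in S$ with $s_{1}(0:_{M}ann(m))\subseteq Rm$. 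Combining the two inclusions gives $(s_{1}s)M\subseteq s_{1}(0:_{M}ann(m))\subseteq Rm$ with $s_{1}s\in S$, so $M$ is $S$-cyclic.

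The main point to get right is the interplay between the two possible conclusions $sa=0$ and $sm=0$ of the $S$-torsion free hypothesis for the single fixed witness $s$: since $s$ may be a zero divisor, one cannot expect $sm=0$ for an arbitrary $m$. The device that resolves this is to select $m$ with $sm\neq 0$, which forces $sa=0$ for every $a\in ann(m)$ and thereby places $sM$ inside $(0:_{M}ann(m))$; the $S$-comultiplication hypothesis then collapses $(0:_{M}ann(m))$ back into $Rm$ up to a factor from $S$. I expect this choice-of-element step, together with the routine identity $ann(Rm)=ann(m)$ needed to invoke Lemma \ref{lma}, to be the only places requiring care.
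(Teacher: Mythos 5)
Your proof is correct and follows essentially the same route as the paper's: split on whether the $S$-torsion-free witness $s$ annihilates $M$, pick $m$ with $sm\neq 0$, deduce $s\,ann(m)=0$ so that $sM\subseteq(0:_{M}ann(m))$, and then use the $S$-comultiplication property of $Rm$ to land inside $Rm$ up to a factor from $S$. Your element-wise treatment of $ann(m)$ is in fact slightly more careful than the paper's appeal to ``$ann(m)m=0$'', but the argument is the same.
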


\begin{proof}
Let $M$ be an $S$-comultiplication and $S$-torsion free module. If $sM=0$ for
some $s\in S,\ $then $M\ $is an $S$-cyclic module. So assume that $sM\neq
0\ $for each $s\in S.\ $Since $M\ $is an $S$-torsion free module, there exists
$t^{\prime}\in S\ $and whenever $am=0\ $for some $a\in R\ $and $m\in M,\ $then
either $t^{\prime}a=0\ $or $t^{\prime}m=0.\ $Since $t^{\prime}M\neq0,\ $there
exists $m\in M\ $such that $t^{\prime}m\neq0$.\ As $M\ $is an $S$%
-comultiplication module, there exists $t\in S\ $such that $t(0:_{M}%
ann(m))\subseteq Rm.\ $Since $ann(m)m=0$ and $M\ $is $S$-torsion free module,
we conclude either $t^{\prime}ann(m)=0\ $or $t^{\prime}m=0.\ $The second case
is impossible. So we have $t^{\prime}ann(m)=0\ $and so $t^{\prime}%
M\subseteq(0:_{M}ann(m)).\ $This implies that $t^{\prime}tM\subseteq
t(0:_{M}ann(m))\subseteq Rm$ where $t^{\prime}t\in S$, namely, $M\ $is an
$S$-cyclic module.
\end{proof}

Let $K\ $be a nonzero submodule of $M.\ K\ $is said to be an $S$-minimal
submodule if $L\subseteq K$\ for some submodule of $M,\ $then there exists
$s\in S\ $such that $sK\subseteq L.\ $

\begin{theorem}
Every $S$-comultiplication prime $R$-module $M\ $is $S$-minimal.
\end{theorem}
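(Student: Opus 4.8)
The plan is to reduce everything to Lemma \ref{lma}(iii), which converts an inclusion of annihilators into an $S$-containment of the corresponding submodules. Recall that calling $M$ a prime module means that $(0)$ is a prime submodule of $M$, equivalently that $ann(N)=ann(M)$ for every nonzero submodule $N$ of $M$; indeed, if $am=0$ with $m\neq0$ then primeness forces $a\in(0:M)=ann(M)$, so $ann(m)\subseteq ann(M)$, while the reverse inclusion is automatic. Thus primeness upgrades the always-valid inclusion $ann(M)\subseteq ann(N)$ to an equality for nonzero $N$, and this is exactly the ingredient Lemma \ref{lma}(iii) wants.

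First I would fix an arbitrary nonzero submodule $L$ of $M$. By the definition of $S$-minimal it suffices to produce some $s\in S$ with $sM\subseteq L$. Since $L\neq0$ and $M$ is prime, the characterization above yields $ann(L)=ann(M)$, and in particular $ann(L)\subseteq ann(M)$. I would then apply Lemma \ref{lma}(iii) to the pair of submodules $K=L$ and $N=M$: its hypothesis $ann(K)\subseteq ann(N)$ is precisely the inclusion $ann(L)\subseteq ann(M)$ just verified, so the lemma supplies an $s\in S$ with $sM=sN\subseteq K=L$. As $L$ was an arbitrary nonzero submodule of $M$, this shows that $M$ is $S$-minimal.

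The only point requiring care is reading the definition of $S$-minimal correctly: the condition must be understood for \emph{nonzero} submodules $L\subseteq K$, for otherwise the choice $L=0$ would force $sM=0$, which fails for a general prime module. Once this is clarified there is no genuine obstacle; all the content is packaged into the equivalence $(i)\Leftrightarrow(iii)$ of Lemma \ref{lma} together with the annihilator description of prime modules. I therefore expect the proof to be short, with the single substantive step being the observation that primeness gives $ann(L)=ann(M)$, after which Lemma \ref{lma}(iii) finishes the argument immediately.
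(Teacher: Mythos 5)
Your proof is correct and takes essentially the same approach as the paper: primeness gives $ann(L)=ann(M)$ for every nonzero submodule $L$, and the $S$-comultiplication hypothesis (via Lemma \ref{lma}) converts this annihilator relation into $sM\subseteq L$. The only cosmetic difference is that you invoke Lemma \ref{lma}(iii) directly, while the paper uses Lemma \ref{lma}(ii) together with the observation $(0:_{M}ann(M))=M$; since (ii) and (iii) are proved equivalent in the lemma, the two arguments coincide in substance, and your caveat about restricting to nonzero $L$ is implicitly assumed in the paper's proof as well.
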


\begin{proof}
Let $M$ be an $S$-comultiplication prime $R$-module. Assume that $N$ is a
submodule of $M$. Since $M$ is prime, $ann(N)=ann(M)$. Also, $(0:_{M}%
ann(N))=(0:_{M}ann(M))$. Since $M$ is an $S$-comultiplication module,
$s(0:_{M}ann(N))\subseteq N\subseteq(0:_{M}ann(N))$ for some $s\in S$. Hence,
we get $s(0:_{M}ann(M))\subseteq N\subseteq(0:_{M}ann(M))$ and it shows that
$sM\subseteq N\subseteq M$. Therefore, $M$ is $S$-minimal.
\end{proof}

\section{S-second submodules of S-comultiplication modules}

This section is dedicated to the study of $S$-second submodules of
$S$-comultiplication module. Now, we need the following definition.

\begin{definition}
Let $M$ and $M^{\prime}\ $be two $R$-modules and $f:M\rightarrow M^{\prime}%
\ $be an $R$-homomorphism.

(i) If there exists $s\in S\ $such that $f(m)=0$ implies that $sm=0,\ $then
$f$ is said to be an $S$-injective (or, just $S$-monic).

(ii) If there exists $s\in S\ $such that $sM^{\prime}\subseteq
\operatorname{Im}f,\ $then $f$ is said to be an $S$-epimorphism (or, just $S$-epic).
\end{definition}

The following proposition is explicit. Let $M\ $be an $R$-module. An element
$x\in R\ $is called a zero divisor on $M\ $if there exists $0\neq m\in
M\ $such that $xm=0,\ $or equivalently,\ $ann_{M}(x)\neq(0).\ $The set of all
zero divisor elements of $R\ $on $M\ $is denoted by $z(M).\ $

\begin{proposition}
Let $M$ and $M^{\prime}\ $be two $R$-modules and $f:M\rightarrow M^{\prime}%
\ $be an $R$-homomorphism.

(i)\ $f$ is $S$-monic if and only if there exists $s\in S\ $such that
$sKer(f)=(0).$

(ii)\ If $f$ is monic, then $f$ is $S$-monic for each m.c.s $S\ $of $R.\ $The
converse holds in case $S\subseteq R-z(M).\ $

(iii)\ If $f$ is epic, then $f$ is $S$-epic for each m.c.s $S\ $of $R.\ $The
converse holds in case $S\subseteq u(R).\ $
\end{proposition}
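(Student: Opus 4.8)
The plan is to treat part (i) as the workhorse that converts the quantifier-laden definition of $S$-monic into the single clean condition $sKer(f)=(0)$, and then to deduce (ii) and (iii) by either trivializing the relevant kernel/image or by exploiting the extra hypotheses placed on $S$. The key observation driving everything is that the definition of $S$-monic supplies one fixed $s\in S$ that works for all $m$ simultaneously (the quantifier on $s$ sits outside the quantifier on $m$), so the implication $f(m)=0\Rightarrow sm=0$ holds uniformly.

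For part (i), both directions are immediate unwindings. For the forward implication I would take any $m\in Ker(f)$; then $f(m)=0$ forces $sm=0$, whence $sKer(f)=(0)$. Conversely, if $sKer(f)=(0)$ for some $s\in S$ and $f(m)=0$, then $m\in Ker(f)$ gives $sm=0$, which is exactly $S$-monicity witnessed by this same $s$.

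For part (ii), the forward implication is the special case $Ker(f)=(0)$: if $f$ is monic then $sKer(f)=(0)$ holds for every $s\in S$ (e.g.\ $s=1$), so (i) yields $S$-monicity for any m.c.s. For the converse I would invoke $S\subseteq R-z(M)$. By (i) there is $s\in S$ with $sKer(f)=(0)$; were $Ker(f)\neq(0)$, choosing $0\neq m\in Ker(f)$ with $sm=0$ would exhibit $s\in z(M)$, contradicting $s\notin z(M)$. Hence $Ker(f)=(0)$ and $f$ is monic. For part (iii), the forward direction is trivial since $\operatorname{Im}f=M'$ gives $sM'\subseteq M'=\operatorname{Im}f$ for every $s\in S$. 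For the converse, $S$-epicity gives $s\in S$ with $sM'\subseteq\operatorname{Im}f$; since $S\subseteq u(R)$, $s$ is a unit, so multiplication by $s$ is an automorphism of $M'$ and $sM'=M'$. Thus $M'=sM'\subseteq\operatorname{Im}f\subseteq M'$, forcing surjectivity; equivalently, each $m'\in M'$ is recovered as $m'=f(s^{-1}m)$ where $f(m)=sm'$.

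There is no serious obstacle here; the only point requiring care is that each converse genuinely needs its hypothesis on $S$. Without $S\subseteq R-z(M)$ the condition $sKer(f)=(0)$ does not force $Ker(f)=(0)$, and without $S\subseteq u(R)$ the containment $sM'\subseteq\operatorname{Im}f$ does not force $\operatorname{Im}f=M'$. So the real substance of the proof is simply recording precisely where these two assumptions enter, which is why the authors flag the statement as explicit.
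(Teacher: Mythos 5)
Your proof is correct in all details: part (i) is the straightforward unwinding of the definition, and the converses in (ii) and (iii) correctly isolate where the hypotheses $S\subseteq R-z(M)$ and $S\subseteq u(R)$ are needed. The paper offers no proof at all---it simply flags the proposition as ``explicit''---so your argument is precisely the routine verification the authors intended and omitted.
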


Recall from \cite{SenArTeKo} that a submodule $P$ of $M\ $with $(P:M)\cap
S=\emptyset$ is said to be an $S$\textit{-prime submodule }if there exists a
fixed $s\in S\ $and whenever $am\in P\ $for some $a\in R,m\in M,\ $then either
$sa\in(P:M)$ or $sm\in P.\ $In particular, an ideal $I\ $of $R\ $is said to be
an $S$\textit{-prime ideal} if $I\ $is an $S$-prime submodule of $M$.\ We note
here that Acraf and Hamed, in their paper \cite{AcHa}, studied and
investigated the further properties of $S$-prime ideals. Now, we give the
following needed results which can be found in \cite{SenArTeKo}.

\begin{proposition}
\label{ppre}(i)\ (\cite[Proposition 2.9]{SenArTeKo}) If $P\ $is an $S$-prime
submodule of $M,\ $then $(P:M)\ $is an $S$-prime ideal of $R.$

(ii)\ (\cite[Lemma 2.16]{SenArTeKo}) If $P\ $is an $S$-prime submodule of
$M,\ $there exists a fixed $s\in S\ $such that $(P:_{M}s^{\prime}%
)\subseteq(P:_{M}s)\ $for each $s^{\prime}\in S.$

(iii)\ (\cite[Theorem 2.18]{SenArTeKo}) $P\ $is an $S$-prime submodule of
$M\ $if and only if $(P:_{M}s)\ $is a prime submodule of $M\ $for some $s\in
S.\ $
\end{proposition}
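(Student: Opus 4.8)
The three parts are exactly \cite[Proposition 2.9]{SenArTeKo}, \cite[Lemma 2.16]{SenArTeKo} and \cite[Theorem 2.18]{SenArTeKo}, so the plan is to run all of them off a single object: the fixed element $s\in S$ witnessing that $P$ is $S$-prime, together with the defining disjointness $(P:M)\cap S=\emptyset$. The key elementary fact I would isolate first is that $ss'\in S$ for every $s'\in S$, so $ss'$ can never lie in $(P:M)$; this one observation drives both (i) and (ii).

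For (i), I would take the witness $s$ of $P$ and verify the two requirements for $(P:M)$ to be an $S$-prime ideal. Disjointness is immediate since $((P:M):R)=(P:M)$ and $(P:M)\cap S=\emptyset$ is given. For the multiplicative condition, suppose $ab\in(P:M)$ with $a,b\in R$, so $abM\subseteq P$; if $sa\notin(P:M)$, then for each $m\in M$ the membership $a(bm)\in P$ forces $s(bm)\in P$ by $S$-primeness, whence $sbM\subseteq P$, i.e. $sb\in(P:M)$. For (ii), I would show that the \emph{same} witness $s$ works: if $s'm\in P$ for some $s'\in S$, then applying $S$-primeness to $a=s'$ gives $ss'\in(P:M)$ or $sm\in P$; the first is impossible because $ss'\in S$, so $sm\in P$, which is exactly $(P:_{M}s')\subseteq(P:_{M}s)$.

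For (iii), the forward direction combines (ii) with the witness. I would set $Q=(P:_{M}s)$; it is proper, since $Q=M$ would give $s\in(P:M)\cap S$. If $am\in Q$, then $a(sm)\in P$, so $S$-primeness yields either $sa\in(P:M)$, giving $aM\subseteq Q$ and hence $a\in(Q:M)$, or $s^{2}m\in P$, in which case $m\in(P:_{M}s^{2})\subseteq(P:_{M}s)=Q$ by (ii) applied to $s'=s^{2}$; thus $Q$ is prime. For the converse, given that $Q=(P:_{M}s)$ is prime, I would read the prime property backwards: from $am\in P\subseteq Q$ one gets $a\in(Q:M)$, i.e. $saM\subseteq P$ so $sa\in(P:M)$, or $m\in Q$, i.e. $sm\in P$, which is precisely the $S$-prime condition with witness $s$.

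The main obstacle is the disjointness clause $(P:M)\cap S=\emptyset$ in the converse of (iii): a prime proper submodule $Q=(P:_{M}s)$ does not by itself force it (for instance $P=6\mathbb{Z}\subseteq\mathbb{Z}$ with $s=2$ gives $Q=3\mathbb{Z}$ prime while $(P:M)\cap S\neq\emptyset$). I would therefore carry $(P:M)\cap S=\emptyset$ as the standing hypothesis under which \cite[Theorem 2.18]{SenArTeKo} is phrased, and note that once that clause is in force the implication-level bookkeeping above is routine. The only other point needing genuine care is the collapse $(P:_{M}s^{2})=(P:_{M}s)$ used in the forward direction, and that is exactly what part (ii) is designed to supply.
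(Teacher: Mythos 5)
Your proof is correct, but there is no proof in the paper to compare it with: Proposition \ref{ppre} is quoted from \cite{SenArTeKo} and stated without proof, so what you have done is reconstruct the argument that the paper imports by citation. Your reconstruction is the natural one, and it is sound at the two places where care is actually needed. In (i), the disjunct $sa\in(P:M)$ does not depend on $m$, so its failure forces $s(bm)\in P$ for every $m\in M$, giving $sb\in(P:M)$; you handled this quantifier order correctly. In (iii), the appeal to (ii) with $s^{\prime}=s^{2}$ is genuinely necessary: applying $S$-primeness directly to $s(sm)\in P$ with $a=s$ only returns $s^{2}m\in P$ and goes in a circle, so the collapse $(P:_{M}s^{2})\subseteq(P:_{M}s)$ supplied by (ii) is exactly the missing step, as you noted. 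Your caveat about the converse in (iii) is also correct and worth recording: as literally stated, ``$(P:_{M}s)$ prime for some $s\in S$'' does not imply $(P:M)\cap S=\emptyset$ (take $P=6\mathbb{Z}$ in $M=\mathbb{Z}$ and $S$ the multiplicative set generated by $2$ and $6$; then $(P:_{M}2)=3\mathbb{Z}$ is prime while $6\in(P:M)\cap S$), so the equivalence must be read, as in \cite{SenArTeKo}, under the standing hypothesis $(P:M)\cap S=\emptyset$. This reading is consistent with how the paper uses the result: the characterization recorded immediately after Proposition \ref{ppre} adds the condition $(P:_{M}s^{\prime})\subseteq(P:_{M}s)$ for all $s^{\prime}\in S$, which together with properness of $(P:_{M}s)$ recovers the disjointness automatically.
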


By the previous proposition, we deduce that $P\ $is an $S$-prime submodule if
and only if there exists a fixed $s\in S\ $such that $(P:_{M}s)\ $is a prime
submodule and $(P:_{M}s^{\prime})\subseteq(P:_{M}s)\ $for each $s^{\prime}\in
S.$

Sevim et al. in \cite{SenArTeKo} gave many characterizations of $S$-prime
submodules. Now, we give a new characterization of $S$-prime submodules from
another point of view.

Recall that a homomorphism $f:M\rightarrow M^{\prime}$ is said to be an
$S$\textit{-zero} if there exists $s\in S\ $such that $sf(m)=0\ $for each
$m\in M,\ $that is, $s\operatorname{Im}f=(0).\ $

\begin{proposition}
\label{pS-prime}Let $P$ be a submodule of $M\ $with $(P:M)\cap S=\emptyset
.\ $The following statements are equivalent.

(i)\ $P\ $is an $S$-prime submodule of $M.\ $

(ii)\ There exists a fixed $s\in S,\ $for any $a\in R\ $and$\ $the homothety
$M/P\overset{a.}{\rightarrow}M/P,\ $either $S$-zero or $S$-injective with
respect to $s\in S.\ $
\end{proposition}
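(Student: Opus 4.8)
The plan is to build a dictionary between the homomorphism language of (ii) and the elementwise language of the definition of $S$-primeness, applied to the homothety $\mu_a\colon M/P\to M/P$, $\mu_a(m+P)=am+P$. First I would record two translations, both relative to a fixed $s\in S$. Unwinding the definition of $S$-zero, $\mu_a$ is $S$-zero with respect to $s$ precisely when $s\operatorname{Im}\mu_a=(0)$, i.e. $sam\in P$ for every $m\in M$, which is exactly the condition $sa\in(P:M)$. Unwinding $S$-injectivity (equivalently, $s\operatorname{Ker}\mu_a=(0)$), and noting that $m+P\in\operatorname{Ker}\mu_a$ means $am\in P$, the map $\mu_a$ is $S$-monic with respect to $s$ precisely when, for all $m\in M$, $am\in P$ implies $sm\in P$.

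For $(i)\Rightarrow(ii)$, I would let $s$ be the fixed witness from the definition of $S$-primeness of $P$ and fix an arbitrary $a\in R$. If $sa\in(P:M)$, the first translation immediately gives that $\mu_a$ is $S$-zero with respect to $s$. Otherwise, whenever $am\in P$ for some $m\in M$, $S$-primeness forces $sa\in(P:M)$ or $sm\in P$; since the first alternative is excluded, $sm\in P$, and by the second translation $\mu_a$ is $S$-injective with respect to $s$. The crucial point is that the very same $s$ serves for every $a$ and in both alternatives, which is exactly the uniformity demanded by (ii).

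For $(ii)\Rightarrow(i)$, I would take $s$ to be the fixed element furnished by (ii) and check that it witnesses $S$-primeness. Suppose $am\in P$ for some $a\in R$ and $m\in M$, so that $m+P\in\operatorname{Ker}\mu_a$. By (ii) the homothety $\mu_a$ is either $S$-zero or $S$-injective with respect to $s$. In the first case the first translation yields $sa\in(P:M)$; in the second case $S$-injectivity applied to $m+P\in\operatorname{Ker}\mu_a$ yields $sm\in P$. In either case $sa\in(P:M)$ or $sm\in P$, so $P$ is $S$-prime with the fixed witness $s$ (and the hypothesis $(P:M)\cap S=\emptyset$ is already assumed, completing the definition).

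Since the whole argument is a translation between two formulations, there is no deep obstacle; the single point that needs care is the bookkeeping of the fixed $s$. One must verify that the chosen $s$ works uniformly for all $a\in R$ at once, and across both the $S$-zero and $S$-injective alternatives, rather than allowing $s$ to depend on $a$. This is exactly what makes the fixed-$s$ formulation of (ii) line up with the fixed-$s$ formulation of $S$-primeness, and it is the reason the statement is phrased with a single $s$ quantified before $a$.
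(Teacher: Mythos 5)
Your proposal is correct and follows essentially the same route as the paper: both arguments unwind the definitions of $S$-zero and $S$-injective for the homothety $M/P\overset{a.}{\rightarrow}M/P$ into the elementwise conditions $sa\in(P:M)$ and ``$am\in P\Rightarrow sm\in P$,'' and then run the evident case analysis with the single fixed $s$ serving uniformly for all $a\in R$. The only cosmetic difference is that in $(i)\Rightarrow(ii)$ you split on whether $sa\in(P:M)$, while the paper splits on whether the homothety fails to be $S$-injective; the two case analyses are logically interchangeable.
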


\begin{proof}
$(i)\Rightarrow(ii):\ $Suppose that $P\ $is an $S$-prime submodule of
$M.\ $Then there exists a fixed $s\in S\ $such that $am\in P\ $for some $a\in
R,m\in M\ $implies that $saM\subseteq P\ $or $sm\in P.\ $Now, take $a\in
R\ $and assume that the homothety $M/P\overset{a.}{\rightarrow}M/P$ is not
$S$-injective with respect to $s\in S.\ $Then there exists $m\in M\ $such that
$a(m+P)=am+P=0_{M/P}\ $but $s(m+P)\neq0_{M/P}.\ $This gives that $am\in
P\ $and $sm\notin P.\ $Since $P$ is an $S$-prime submodule, we have
$sa\in(P:M)\ $and thus $sam^{\prime}\in P\ $for each $m^{\prime}\in M.\ $Then
we have $sa(m^{\prime}+P)=0_{M/P}\ $for each $m^{\prime}\in M,\ $that is, the
homothety $M/P\overset{a.}{\rightarrow}M/P$ is $S$-zero with respect to $s$.

$(ii)\Rightarrow(i):\ $Suppose that $(ii)$ holds. Let $am\in P\ $for some
$a\in R\ $and $m\in M.\ $Assume that $sm\notin P.\ $Then we deduce the
homothety $M/P\overset{a.}{\rightarrow}M/P$ is not $S$-injective. Thus by
(ii), $M/P\overset{a.}{\rightarrow}M/P$ is $S$-zero with respect to $s\in
S,\ $namely, $sa(m^{\prime}+P)=0_{M/P}\ $for each $m^{\prime}\in M.\ $This
yields that $sa\in(P:M).\ $Therefore, $P\ $is an $S$-prime submodule of $M.$
\end{proof}

It is well known that a submodule $P\ $of $M\ $is a prime submodule if and
only if every homothety $M/P\overset{a.}{\rightarrow}M/P$ is either injective
or zero. This fact can be obtained by Propositon \ref{pS-prime} by taking
$S\subseteq u(R).\ $

Recall from \cite{Fa} that a submodule $N\ $of $M\ $with $ann(N)\cap
S=\emptyset$ is said to be an $S$-second submodule if there exists $s\in
S,\ srN=0\ $or $srN=sN$ for each $r\in R.$\ Motivated by Proposition
\ref{pS-prime}, we give a new characterization of $S$-second submodules from
another point of view. Since the proof is similar to Proposition
\ref{pS-prime}, we omit the proof.

\begin{theorem}
\label{tsec}Let $N\ $be a submodule of $M\ $with $ann(N)\cap S=\emptyset
.\ $The following assertions are equivalent.

(i) $N\ $is an $S$-second submodule.

(ii)\ There exists $s\in S\ $such that for each $a\in R,\ $the
homothety$\ N\overset{a.}{\rightarrow}N\ $is either $S$-zero or $S$-surjective
with respect to $s\in S.\ $

(iii) There exists a fixed $s\in S$, for each $a\in R$, either $saN=0$ or
$sN\subseteq aN$.
\end{theorem}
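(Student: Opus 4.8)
The plan is to follow the template of Proposition \ref{pS-prime}, translating the algebraic condition defining an $S$-second submodule into the homothety language and treating the chain (i)$\Leftrightarrow$(ii) and (ii)$\Leftrightarrow$(iii) separately. Throughout, for a fixed $a\in R$ I would look at the homothety $N\overset{a\cdot}{\rightarrow}N$, whose image is $aN$. With this single observation the equivalence (ii)$\Leftrightarrow$(iii) is just a matter of unwinding definitions: by the definition of an $S$-zero map the homothety is $S$-zero with respect to $s$ exactly when $saN=0$, and by the definition of an $S$-epimorphism it is $S$-surjective with respect to $s$ exactly when $sN\subseteq aN$. So once a witness $s$ is pinned down, statements (ii) and (iii) say the same thing verbatim, and no further work is needed.

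For (i)$\Rightarrow$(iii) I would imitate the forward direction of Proposition \ref{pS-prime}. Starting from a fixed witness $s\in S$ for the $S$-second condition, for each $a\in R$ either $saN=0$, which is the first alternative of (iii), or $saN=sN$. In the latter case the key computation is $sN=saN=a(sN)\subseteq aN$, where the inclusion uses only $sN\subseteq N$; this yields $sN\subseteq aN$, the second alternative of (iii). Since the same $s$ handles every $a$, this produces (iii) with the same fixed witness, exactly as in the $S$-prime case.

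The genuinely delicate direction is (iii)$\Rightarrow$(i) (equivalently (ii)$\Rightarrow$(i)), and this is where I expect the main obstacle. The $S$-surjectivity alternative only delivers the inclusion $sN\subseteq aN$, whereas the $S$-second condition demands the equality $saN=sN$. One inclusion is free, namely $saN\subseteq sN$ (again from $aN\subseteq N$), so the entire difficulty is to promote $sN\subseteq aN$ to $sN\subseteq saN$. Multiplying $sN\subseteq aN$ by $s$ only gives $s^{2}N\subseteq saN$, so a naive re-multiplication loses one factor of $s$ and does not close the gap with a fixed witness. Note that this asymmetry is absent in Proposition \ref{pS-prime}, where the $S$-prime conclusion $sa\in(P:M)$ is itself an inclusion that matches $S$-zeroness directly; here the target is an equality, which is precisely why the dual argument is not automatic.

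The clean way I would organize this last step is to pass to the localization. The hypothesis $ann(N)\cap S=\emptyset$ guarantees $S^{-1}N\neq0$, and after applying $S^{-1}$ the inclusion $sN\subseteq aN$ becomes $S^{-1}N\subseteq(a/1)S^{-1}N$, while $aN\subseteq N$ gives $(a/1)S^{-1}N\subseteq S^{-1}N$; together these force $(a/1)S^{-1}N=S^{-1}N$. Thus both (iii) and (i) are equivalent to the single statement that $S^{-1}N$ is a second submodule of the $S^{-1}R$-module $S^{-1}M$, and the stray factor of $s$ is exactly what the localization bookkeeping absorbs. I would therefore present (iii)$\Rightarrow$(i) by first establishing that $S^{-1}N$ is second and then descending, rather than attempting a direct element chase; the remaining point to watch, and where I would spend the most care, is the descent to one fixed witness $s$ valid for all $a$ simultaneously, since that uniformity is the real content hidden behind the inclusion-to-equality passage.
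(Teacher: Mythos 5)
Your reduction of (ii) to (iii) by unwinding the definitions of $S$-zero and $S$-epic maps is correct, and so is your proof of (i)$\Rightarrow$(iii) with the same witness; you have also correctly located the crux, namely promoting the inclusion $sN\subseteq aN$ back to an equality with one fixed witness. (For reference, the paper omits the proof of this theorem, remarking only that it is similar to Proposition \ref{pS-prime}.) However, your plan for (iii)$\Rightarrow$(i) via localization has a genuine gap that cannot be closed: the implication ``$S^{-1}N$ is a second submodule of $S^{-1}M$, hence $N$ satisfies (iii) (equivalently, is $S$-second)'' is false for a general m.c.s. $S$. Concretely, take $R=M=N=\mathbb{Z}$ and $S=\mathbb{Z}\setminus\{0\}$. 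Then $ann(N)\cap S=\emptyset$ and $S^{-1}N=\mathbb{Q}$ is a second submodule of the $\mathbb{Q}$-module $\mathbb{Q}$, yet for any fixed $s\in S$ and any prime $p$ not dividing $s$ we have $spN\neq 0$ and $sN\not\subseteq pN$, so (iii) fails; likewise $N$ is not $S$-second, since $s\cdot 2\cdot N=2s\mathbb{Z}$ is neither $0$ nor $s\mathbb{Z}$. The uniformity of a single witness $s$ over all $a\in R$ is precisely the information that localization erases; this is why the paper's own localization result, Theorem \ref{tloc}, requires the maximal multiple condition on $S$, which $\mathbb{Z}\setminus\{0\}$ does not satisfy. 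A secondary inaccuracy: $ann(N)\cap S=\emptyset$ does not force $S^{-1}N\neq 0$ unless $N$ is finitely generated (consider $N=\bigoplus_{n\geq 1}\mathbb{Z}/2^{n}\mathbb{Z}$ over $\mathbb{Z}$ with $S=\{2^{k}:k\geq 0\}$).

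The direction (iii)$\Rightarrow$(i) has an elementary fix entirely in the spirit of the paper's proof of Proposition \ref{pS-prime}: re-apply the hypothesis to the element $sa$ rather than $a$, and enlarge the witness to $s^{2}$. Let $s$ be the witness of (iii) and let $a\in R$. Applying (iii) to $sa\in R$, either $s(sa)N=0$, that is $s^{2}aN=0$, or $sN\subseteq saN$; in the latter case the automatic inclusion $saN=s(aN)\subseteq sN$ gives $saN=sN$, whence $s^{2}aN=s(saN)=s(sN)=s^{2}N$. Thus for every $a\in R$ either $s^{2}aN=0$ or $s^{2}aN=s^{2}N$, which together with the standing hypothesis $ann(N)\cap S=\emptyset$ is exactly (i) with the fixed witness $s^{2}\in S$. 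This re-application of the hypothesis is the one genuine wrinkle distinguishing the second-submodule case from the prime-submodule case (where both alternatives are already one-sided inclusions), and it replaces your localization step entirely.
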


The author in \cite{Fa} proved that if $N\ $is an $S$-second submodule of $M$,
then $ann(N)\ $is an $S$-prime ideal of $R\ $and the converse holds under the
assumption that $M\ $is comultiplication \cite[Proposition 2.9]{Fa}. Now, we
show that this fact is true even if $M\ $is an $S$-comultiplication module.

\begin{theorem}
\label{tm3}Let $M\ $be an $S$-comultiplication module. The following
statements are equivalent.

(i)\ $N\ $is an $S$-second sumodule of $M$.

(ii)\ $ann(N)\ $is an $S$-prime ideal of $R$ and there exists $s\in S\ $such
that $sN\subseteq s^{\prime}N$ for each $s^{\prime}\in S.\ $
\end{theorem}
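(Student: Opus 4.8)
The plan is to prove both implications using the characterization of $S$-second submodules from Theorem \ref{tsec}(iii): $N$ is $S$-second precisely when $ann(N)\cap S=\emptyset$ and there is a \emph{fixed} $s\in S$ such that, for every $a\in R$, either $saN=0$ or $sN\subseteq aN$. Throughout I would exploit that $aN$ is again a submodule of $M$ (so $rK\subseteq K$ for any submodule $K$ and $r\in R$), and note that the $S$-comultiplication hypothesis is only needed for $(ii)\Rightarrow(i)$.

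For $(i)\Rightarrow(ii)$ I would fix the $S$-second witness $s\in S$, so that for each $r\in R$ either $srN=0$ or $srN=sN$. To see $ann(N)$ is $S$-prime, suppose $ab\in ann(N)$, i.e.\ $abN=0$, and apply the dichotomy to $b$: if $sbN=0$ then $sb\in ann(N)$; otherwise $sbN=sN$, and multiplying by $a$ gives $saN=a(sbN)=s(abN)=0$, so $sa\in ann(N)$. Hence the same $s$ witnesses $S$-primeness of $ann(N)$, and $ann(N)\cap S=\emptyset$ is built into the definition of $S$-second. For the uniform clause, applying the dichotomy to an arbitrary $s^{\prime}\in S$ and observing that $ss^{\prime}\in S$ cannot lie in $ann(N)$ forces $ss^{\prime}N=sN$, so $sN=ss^{\prime}N\subseteq s^{\prime}N$; thus $s$ also serves as the required uniform element.

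The implication $(ii)\Rightarrow(i)$ is the substantive part. Let $s_{1}\in S$ witness that $ann(N)$ is $S$-prime and let $s_{2}\in S$ satisfy $s_{2}N\subseteq s^{\prime}N$ for all $s^{\prime}\in S$. Fixing $a\in R$, I would split on whether $s_{1}a\in ann(N)$. If $s_{1}aN=0$, then already $(s_{1}s_{2})aN=0$, the first alternative. If $s_{1}aN\neq0$, then for every $x\in ann(aN)$ we have $xa\in ann(N)$, so $S$-primeness forces $s_{1}x\in ann(N)$; that is, $s_{1}\,ann(aN)\subseteq ann(N)$. Transferring this through $(0:_{M}-)$ yields $s_{1}(0:_{M}ann(N))\subseteq(0:_{M}ann(aN))$. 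Applying Lemma \ref{lma}(ii) to the submodule $aN$ produces $s_{4}\in S$ with $s_{4}(0:_{M}ann(aN))\subseteq aN$, and combining with $N\subseteq(0:_{M}ann(N))$ gives $s_{4}s_{1}N\subseteq aN$.

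The hard part is that the comultiplication witness $s_{4}$ depends on $a$, whereas Theorem \ref{tsec}(iii) demands a single $s$ valid for all $a$. This is exactly where the uniform clause of (ii) is decisive: since $s_{4}s_{1}\in S$, we get $s_{2}N\subseteq s_{4}s_{1}N\subseteq aN$, which removes the dependence on $a$. Setting $s=s_{1}s_{2}$, the first case gives $saN=0$ and the second gives $sN=s_{1}s_{2}N\subseteq s_{1}(aN)\subseteq aN$; hence $s$ is a fixed witness and $N$ is $S$-second. I expect the only delicate bookkeeping to be the passage $s_{1}\,ann(aN)\subseteq ann(N)\Rightarrow s_{1}(0:_{M}ann(N))\subseteq(0:_{M}ann(aN))$ and keeping the two fixed elements $s_{1},s_{2}$ cleanly separated from the per-$a$ element $s_{4}$.
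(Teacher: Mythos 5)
Your proposal is correct, and for the substantive direction $(ii)\Rightarrow(i)$ it follows essentially the same route as the paper: apply the $S$-comultiplication hypothesis to the submodule $aN$, use $S$-primeness of $ann(N)$ (with the case $s_{1}a\in ann(N)$ ruled out) to pass annihilators into $ann(N)$, and then invoke the uniform clause $s_{2}N\subseteq s^{\prime}N$ to eliminate the dependence of the comultiplication witness on $a$ before concluding via Theorem \ref{tsec}(iii) --- the paper works with an arbitrary ideal $I$ satisfying $s^{\prime}(0:_{M}I)\subseteq aN\subseteq(0:_{M}I)$ where you work with $ann(aN)$ via Lemma \ref{lma}(ii), a cosmetic difference. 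The only other deviations are improvements in self-containedness: you prove $(i)\Rightarrow(ii)$ directly (correctly, via the dichotomy $sbN=0$ or $sbN=sN$ and the observation that $ss^{\prime}N=0$ would contradict $ann(N)\cap S=\emptyset$) where the paper merely cites \cite{Fa}, and your bookkeeping cleanly separates the fixed witnesses $s_{1},s_{2}$ from the per-$a$ element and exhibits the final witness $s=s_{1}s_{2}$ explicitly, which the paper's notation (reusing $s$ for several different elements) leaves implicit.
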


\begin{proof}
$(i)\Rightarrow(ii):\ $The claim follows from \cite[Proposition 2.9]{Fa} and
\cite[Lemma 2.13]{Fa}.

$(ii)\Rightarrow(i):\ $Suppose that $ann(N)\ $is an $S$-prime ideal of
$R.\ $Now, we will show that $N\ $is an $S$-second submodule of $M.\ $To prove
this, take $a\in R.\ $Since $ann(N)\ $is an $S$-prime ideal, by Proposition
\ref{ppre}, there exists $s\in S\ $such that $ann(sN)$ is a prime ideal and
$ann(s^{\prime}N)\subseteq ann(sN)\ $for each $s^{\prime}\in S.\ $Assume that
$saN\neq(0).\ $Now, we shall show that $sN\subseteq aN.\ $Since $M\ $is an
$S$-comultiplication module, there exists $s^{\prime}\in S$ and an ideal
$I\ $of $R\ $such that $s^{\prime}(0:_{M}I)\subseteq aN\subseteq(0:_{M}%
I).\ $This implies that $aI\subseteq ann(N).\ $Since $ann(N)\ $is an $S$-prime
ideal, there exists $s\in S$ such that either$\ sa\in ann(N)\ $or $sI\subseteq
ann(N)\ $by Proposition \ref{ppre}. The first case impossible since
$saN\neq(0).\ $Thus we have $I\subseteq ann(sN).\ $Then we have $s^{\prime
}s(0:_{M}ann(sN))\subseteq s^{\prime}(0:_{M}I)\subseteq aN.\ $This implies
that $s^{\prime}s^{2}N\subseteq s^{\prime}s(0:_{M}ann(sN))\subseteq aN.\ $Then
by $(ii),\ sN\subseteq s^{\prime}s^{2}N\subseteq aN.\ $Then by Theorem
\ref{tsec} (iii), $N\ $is an $S$-second submodule of $M.\ $
\end{proof}

\begin{theorem}
Let $M\ $be a comultiplication module. The following statements are equivalent.

(i)\ $N$\ is a second submodule of $M.$

(ii)\ $ann(N)\ $is a prime ideal of $R.\ $
\end{theorem}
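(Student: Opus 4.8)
The plan is to obtain this classical statement as the special case $S=\{1\}$ of Theorem \ref{tm3}. First I would fix $S=\{1\}$, which is a m.c.s of $R$ contained in $u(R)$. By Example \ref{exco}, any comultiplication module $M$ is in particular an $S$-comultiplication module for this $S$, so the standing hypothesis of Theorem \ref{tm3} is satisfied and the machinery there becomes available.

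Next I would verify that, under $S=\{1\}$, every $S$-notion occurring in Theorem \ref{tm3} collapses to its classical counterpart, stating each translation explicitly. For a submodule $N$, the condition $ann(N)\cap S=\emptyset$ becomes $1\notin ann(N)$, i.e. $N\neq0$; moreover $N$ being $S$-second means exactly that for every $a\in R$ one has $aN=0$ or $aN=N$, which is the definition of a second submodule. In the same way, $ann(N)$ being an $S$-prime ideal with $S=\{1\}$ asserts that $ann(N)$ is proper and that $ab\in ann(N)$ forces $a\in ann(N)$ or $b\in ann(N)$, that is, $ann(N)$ is a prime ideal of $R$.

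Finally I would dispose of the extra technical clause in Theorem \ref{tm3}(ii). With $S=\{1\}$ the requirement that there exist $s\in S$ with $sN\subseteq s^{\prime}N$ for each $s^{\prime}\in S$ reduces to $N\subseteq N$, which holds trivially. Consequently, for $S=\{1\}$ the statement (ii) of Theorem \ref{tm3} is equivalent to ``$ann(N)$ is a prime ideal of $R$'', while statement (i) is equivalent to ``$N$ is a second submodule of $M$''. Invoking Theorem \ref{tm3} then delivers the desired equivalence at once.

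There is essentially no analytic obstacle in this argument, since it is a pure specialization; the only point demanding genuine care is the bookkeeping that confirms these three translations of the $S$-terminology into the $S\subseteq u(R)$ regime, so I would prefer to record each of them in the proof rather than leave them implicit, in parallel with the way the Dual Nakayama Corollary is deduced from Theorem \ref{tdu}.
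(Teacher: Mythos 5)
Your proposal is correct and takes essentially the same approach as the paper: both deduce the theorem by specializing Theorem \ref{tm3} to a m.c.s.\ of units (the paper takes $S\subseteq u(R)$, you take $S=\{1\}$) and noting that $S$-comultiplication, $S$-second, and $S$-prime then collapse to their classical counterparts. Your write-up is in fact slightly more careful than the paper's, since you explicitly verify that the auxiliary clause ``there exists $s\in S$ with $sN\subseteq s^{\prime}N$ for each $s^{\prime}\in S$'' in Theorem \ref{tm3}(ii) becomes vacuous, a point the paper leaves implicit.
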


\begin{proof}
Take $S\subseteq u(R)$ and note that $S$-comultiplication module and
comultiplication modules are equal. On the other hand, second submodule and
$S$-second submodules are equivalent. The rest follows from Theorem \ref{tm3}.
\end{proof}

\begin{theorem}
Let $M\ $be an $S$-comultiplication module and let $N\ $be an $S$-second
submodule of $M.\ $If $N\subseteq N_{1}+N_{2}+\cdots+N_{m}\ $for some
submodules $N_{1},N_{2},\ldots,N_{m}\ $of $M,\ $then there exists $s\in
S\ $such that $sN\subseteq N_{i}\ $for some $1\leq i\leq m.$
\end{theorem}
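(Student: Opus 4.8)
The plan is to pass everything to annihilators and exploit that $ann(N)$ is an $S$-prime ideal. First I would record the two inputs. Since $N$ is $S$-second, the result of Farshadifar quoted in the proof of Theorem \ref{tm3} (\cite[Proposition 2.9]{Fa}) gives that $ann(N)$ is an $S$-prime ideal of $R$. Applying Proposition \ref{ppre}(iii) to $R$ as a module over itself yields a fixed $s_{0}\in S$ for which $Q:=(ann(N):_{R}s_{0})$ is an honest prime ideal of $R$; it is proper because $ann(N)\cap S=\emptyset$ forces $s_{0}\notin ann(N)$. This reduction is the device that replaces the $S$-prime condition, with its floating multiplier, by a genuine prime ideal to which ordinary prime avoidance applies.

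Next I would convert the hypothesis $N\subseteq N_{1}+\cdots+N_{m}$ into an ideal containment. Taking annihilators reverses inclusions and turns the sum into an intersection, so $\prod_{i=1}^{m}ann(N_{i})\subseteq\bigcap_{i=1}^{m}ann(N_{i})=ann(N_{1}+\cdots+N_{m})\subseteq ann(N)\subseteq Q$. Since $Q$ is prime and contains this product of ideals, it contains one of the factors: $ann(N_{i})\subseteq Q$ for some $i$, which unwinds to $s_{0}\,ann(N_{i})\subseteq ann(N)$.

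It then remains to pull this ideal containment back to a containment of submodules, and here the $S$-comultiplication hypothesis does the work. For any $m\in N$ we have $ann(N)m=0$, hence $s_{0}\,ann(N_{i})m=0$, that is, $ann(N_{i})(s_{0}m)=0$; thus $s_{0}N\subseteq(0:_{M}ann(N_{i}))$. Applying Lemma \ref{lma}(ii) to the submodule $N_{i}$ produces $t_{i}\in S$ with $t_{i}(0:_{M}ann(N_{i}))\subseteq N_{i}$, so that $(t_{i}s_{0})N\subseteq t_{i}(0:_{M}ann(N_{i}))\subseteq N_{i}$. Setting $s=t_{i}s_{0}\in S$ finishes the argument.

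The hard part will be purely bookkeeping around the $S$-prime condition: the definition carries a fixed multiplier, and naively chasing a product of annihilators through it invites errors. Routing everything through the characterization that $(ann(N):_{R}s_{0})$ is prime (Proposition \ref{ppre}(iii)) sidesteps this, after which both the prime-avoidance step and the final pullback through $S$-comultiplication are routine. I would also note that the $S$-second hypothesis on $N$ enters only through the $S$-primeness of $ann(N)$, while the containment $N\subseteq(0:_{M}ann(N))$ used above is automatic from the definition of $ann(N)$.
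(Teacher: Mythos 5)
Your proof is correct, and its overall skeleton matches the paper's: both pass to annihilators via $\bigcap_{i=1}^{m}ann(N_{i})=ann(N_{1}+\cdots+N_{m})\subseteq ann(N)$, both use that $ann(N)$ is an $S$-prime ideal because $N$ is $S$-second, both extract an index $i$ with $s_{0}\,ann(N_{i})\subseteq ann(N)$, and both finish by pulling this back through the $S$-comultiplication hypothesis (your use of Lemma \ref{lma}(ii) applied to $N_{i}$ is exactly Lemma \ref{lma}(iii) unwound, which is what the paper cites: $ann(N_{i})\subseteq ann(s_{0}N)$ yields $ts_{0}N\subseteq N_{i}$). The genuine difference lies in how the index $i$ is obtained. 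The paper invokes \cite[Corollary 2.6]{SenArTeKo}, an external result about $S$-prime ideals containing finite intersections, which is cited but never stated in the paper. You instead derive this step from material the paper does state: Proposition \ref{ppre}(iii) applied to $R$ as a module over itself replaces the $S$-prime ideal $ann(N)$ by the honest prime ideal $Q=(ann(N):_{R}s_{0})$, after which the chain $\prod_{i}ann(N_{i})\subseteq\bigcap_{i}ann(N_{i})\subseteq ann(N)\subseteq Q$ and ordinary prime avoidance (a prime containing a product of ideals contains a factor) give $ann(N_{i})\subseteq Q$, i.e. $s_{0}\,ann(N_{i})\subseteq ann(N)$. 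What your route buys is self-containedness and transparency about where the fixed multiplier $s_{0}$ comes from; the cost is one extra reduction step. A further small point in your favor: you observe that only the implication ``$S$-second $\Rightarrow$ $ann(N)$ is $S$-prime'' (\cite[Proposition 2.9]{Fa}) is needed, rather than routing through Theorem \ref{tm3}, whose full equivalence requires the $S$-comultiplication hypothesis that is irrelevant to this direction.
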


\begin{proof}
Suppose that $N\ $is an $S$-second submodule of an $S$-comultiplication module
$M.\ $Suppose that $N\subseteq\sum\limits_{i=1}^{m}N_{i}$ for some submodules
$N_{1},N_{2},\ldots,N_{m}\ $of $M.\ $Then we have $ann(\sum\limits_{i=1}%
^{m}N_{i})=\bigcap\limits_{i=1}^{m}ann(N_{i})\subseteq ann(N).\ $Since $N\ $is
an $S$-second submodule, by Theorem \ref{tm3}, $ann(N)\ $is an $S$-prime ideal
of $R.\ $Then by \cite[Corollary 2.6]{SenArTeKo}, there exists $s\in S\ $such
that $sann(N_{i})\subseteq ann(N)\ $for some $1\leq i\leq m.\ $This implies
that $ann(N_{i})\subseteq ann(sN).\ $Then by Lemma \ref{lma} (iii),
$stN\subseteq N_{i}\ $for some $t\in S\ $which completes the proof.
\end{proof}

\end{document}